\title{Hausdorff dimension of asymptotic self-similar sets}
\author{Daruhan Wu}
\address{Graduate School of Pure and Applied Sciences, University of Tsukuba, 
  Tsukuba, 305-8571, Japan}
\curraddr{School of Statistics and Mathematics, Inner Mongolia University of Finance and Economics, Huhhot, China}
 \email{daruhan@imufe.edu.cn}
\author{Takao Yamaguchi*}
\email{takaoy@math.kyoto-u.ac.jp}
\address{Department of mathematics, Kyoto University, Kitashirakawa, Kyoto 606--8502, Japan}
\thanks{This work was supported by JSPS KAKENHI Grant Numbers  26287010, 15K13436, 15H05739}
\subjclass[2010]{primary 28A80, 28A78; secondary 53C20}
\keywords{self-similar set, Hausdroff dimension,Sierpinski gasket,Moran construction }
\date{\today}
\theoremstyle{plain}
\newtheorem{theorem}{Theorem}[section]
\newtheorem{lemma}[theorem]{Lemma}
\newtheorem{corollary}[theorem]{Corollary}
\newtheorem{sublemma}[theorem]{Sublemma}
\newtheorem{definition}[theorem]{Definition}
\newtheorem{assertion}[theorem]{Assertion}
\newtheorem{example}[theorem]{Example}
\newcommand{\supp}[0]{\mathrm{supp}}
\begin{document}
\begin{abstract}
In this paper, we introduce the notion of asymptotic self-similar sets on general 
doubling metric spaces by extending the notion of  self-similar sets, 
and determine their Hausdorff dimensions, which gives an extension of  Balogh and Rohner 's result. 
This is carried out by introducing the notions of almost similarity maps and asymptotic similarity systems. 
These notions  have an advantage of
making geometric constructions possible.
Actually, as an application, we determined the Hausdorff dimension of general Sierpinski gaskets on complete surfaces
constructed by a geometric way in a natural manner.
\end{abstract}

%\end{abstract}
\maketitle

%%%%%%%%%start section {introduction}

\section{Introduction}
%
%%% background %%%%%%%%%%%%%%%%%%%%%%%%%%%%%%%%%%%%%%%%%%%%%%%%%%%%%%%%%%%%%%%%%%%%%%%%%
The notion of self-similar sets or general Cantor sets have played significant roles in fractal geometry. 
These sets are usually  defined by means of iterated function systems 
$\{f_1,\cdots,f_k\}$ consisting of contracting similarity maps on a complete metric space 
as the unique nonempty compact set $K$,  called  an attractor or an invariant set, 
satisfying $K={\bigcup}_{i=1}^n f_i(K)$.
Hutchinson \cite{Hutchinson} (cf. Kigami \cite{Kigami}, Schief \cite{Schief}) introduced the notion of 
the open set condition and determined the Hausdorff dimension of  self-similar sets in Euclidean space $\mathbb{R}^n$ 
satisfying the open set condition. %as the similarity dimension.
Balogh and Rohner extended Hutchinson's result to doubling metric spaces (\cite{Balogh}).
However, it is difficult to construct a similarity map in general metric spaces. \ Actually, similarity maps 
do not always exist on curved metric spaces. 
To overcome this difficulty, in the previous work \cite{Wu2}, the first named author  introduced 
the notion  of $(\lambda, c,\nu)$-almost similarity maps extending that of 
$\lambda$-similarity maps in order to construct 
generalized Cantor sets in general metric measure spaces,  and determined the Hausdorff dimension
of such a generalized Cantor set. 
However the basic subsets considered in \cite{Wu2}  are assumed to be  disjoint each other, and therefore
generalized Cantor sets like Sierpinski gaskets are excluded in the results of \cite{Wu2}.

In the present paper, we extend both Balogh and Rohner 's result and our previous result
to the case when basic subsets may have intersections with their boundary 
by introducing a generalized open set condition. As an application, we determine the Hausdorff 
dimension of Sierpinski gaskets on complete surfaces defined via  geometric way.  

%%%%%%%%  doubling  metric space %%%%%%%%%%%%%%%%%%%%%%%%%%%%%%%%%%%%%%%%%%%%%%%%%%%%%%%%%%%%%%%%%%%%%%
Let $X$ be a proper complete metric space. We assume that $X$
is doubling in the sense  of \cite{Balogh} (see Section \ref{sec:prelim} for the precise definition).
Complete Riemannian manifolds with Ricci curvature bounded from below are 
typical examples of doubling metric spaces (cf. \cite{Grom}).
Doubling metric spaces also appears in metric measure spaces satisfying a doubling condition. 
%The assumption $(b)$ of our Main Theorem is essentially the same as considering 
% 
Nowadays,  geomeric analysis on doubling metric measure spaces has been very active 
(see for instance Assouad \cite{Assouad}, Gromov\cite{Grom}, Heinonen \cite{Heinonen}, 
Villani\cite{Vill}), and therefore it is quite natural to study self-similarity sets 
in  such doubling metric spaces.   
%%%%%%%%%%%%%%%%%%%  almost similarity map %%%%%%%%%%%%%%%%%%%%%%%%%%%%%%%%%%%%%%%%%%%%%%%%%%%%%%%%%%%

Let $\bar U \supset \bar V$ be  bounded domains  in  $X$ homeomorphic to each other,
where $\bar U$ and $\bar V$ denote the closures of the open subsets 
$U$ and $V$.
Fix  constants $0<\lambda<1$, $0<\nu<1$ and a continuous increasing  function 
$\varphi:(0,\infty)\to (0,\infty)$ with 
$\lim_{x\to +0}\varphi(x)=0$.
We call a homeomorphism $f:\bar U \to \bar V$ a {\it $(\lambda, \varphi(|\bar U|),\nu)$-almost similarity map} 
if  for every $x,y\in \bar U$,
\begin{align}
   \left|\frac{d(f(x),f(y))}{d(x,y)}-\lambda\right|&\leq \lambda\varphi(|U|),\\
   |V|\le  \nu |U|.
\end{align}
where\ $|U|$ is the diameter of $U$.
Then the set $\bar V$ is called a $(\lambda, \varphi(|\bar U|),\nu)$-almost similar  set of  $\bar U$.\\

%%%%%%%%%%%%%%%%%%%   $\varphi$ and \mathcal I  %%%%%%%%%%%%%%%%%%%%%%%%%%%%%%%%%%%%%%%%%%%%%%%%%%%%%%%%%%
In this paper, we assume the following conditions for $\varphi$:
\begin{equation}
 \begin{aligned}
    & \varphi:(0,\infty)\to (0,\infty) \,\,\rm{is\,\, increasing \, \,with} \,\,\lim_{x\to +0} \varphi(x)=0; \\
   &  \displaystyle{ \int_1^{\infty} \varphi(a\nu^{x})\,dx <\infty} \,\,\rm{for\,\, some\,\, constants} \,\,a>0 \,\, \rm{and}\,\, 0<\nu<1. \label{eq:varphi}
\end{aligned}
\end{equation}
%merate}
%  \item  $\varphi:(0,\infty)\to [0,\infty)$ is non-decreasing  with $\lim_{x\to +0} \varphi(x)=0;$
%  \item  $\displaystyle{ \int_1^{\infty} \varphi(a\nu^{x})\,dx <\infty}$ for some constants $a>0$ and $0<\nu<1$.
%\end{enumerate}
Note that the second condition $(2)$ above does not depend on the choice of 
$a>0$ and $0<\nu<1$, and that for any $\alpha>0$ and any positive integer $n$, 
the following functions satisfy the above conditions:
\[
   \varphi(y)=y^{\alpha},   \,\, \,  \varphi(y) = - (\log y)^{-1-\frac{2}{2n+1}}. 
\]
For a fixed positive integer $k$, we let $\mathcal I=\{ 1,2,\ldots,k\}$. 
We denote by $\mathcal I^*$ the set of all ordered multi-indices
$I=i_1\cdots i_n$ with $n\ge 1$, $i_j\in\mathcal I$ for every $1\le j \le n$.
We set $|I|=|i_1\cdots i_n|=n$ and call it the length of $I$. 
Let $\mathcal I^n$ denote the set of all $I\in\mathcal  I$ of length $n$.

%%%%%%%%%%%%%%%%%%%  DEF(asymptotic self-similar set ) %%%%%%%%%%%%%%%%%%%%%%%%%%%%%%%%%%%%%%%%%%%%%%%%%%%%%%%%%%%
In the present paper, we investigate an asymptotic self-similar set  in $X$, which is 
defined under the following hypothesis:
For $0<\nu<1$ and $a>0$, let $\varphi:(0,\infty)\to (0,\infty)$ be a continuous function satisfying 
the above conditions \eqref{eq:varphi}. 

\begin{definition} \upshape
Suppose that 
 {\it ratio coefficients} $0<\lambda_i<1$, $ (i=1,\ldots,k)$
together with a non-empty open subset $V\subset X$ 
are given for which we have 
\begin{enumerate}
\item for each  $i \in\mathcal I$,  a $(\lambda_i, \varphi(|\bar V|),\nu)$-almost similarity map 
\[
                        f_i:\bar{V}\to\bar V_i\subset \bar{V},
\]
is  given in such a way that
$V_i\cap V_j=\emptyset$ for every $i\neq j\in\mathcal I$,  where $V_i:=f_i(V);$
\item for each $ij\in\mathcal I^2$,
a $(\lambda_j, \varphi(|\bar V_{i}|),\nu)$-almost similarity map 
\[
                     f_{ij}:\bar{V_{i}}\to  \bar V_{ij}\subset \bar{V}_{i},
\] 
is given in such a way that ${V}_{ij}\cap {V}_{ij^\prime}=\emptyset$
for every $j\neq j'\in\mathcal I$,  where ${V}_{ij}:=f_{ij}({V}_{i})$; 
\item for each $I'\in\mathcal I^{n-1}$ and $i_n\in\mathcal I$ with $I:=I' i_n$,
a $(\lambda_{i_{n}},\varphi(|\bar V_{I'}|),\nu)$-almost similarity map
\[
       f_{I}:\bar{V}_{I'} \to\bar{V}_{I}\subset \bar V_{I'},
\]
is defined 
in such a way that $V_{I'i} \cap V_{I'j}=\emptyset$ for every $i\neq j\in\mathcal I$,
where ${V}_{I}:=f_{I}(V_{I'})$.   
\end{enumerate}

We call $\{ (\bar V_I, f_I)\}_{I\in \mathcal I^*}$ an {\it $(\{ \lambda_i\}_{i=1}^k, \varphi,\nu)$-asymptotic similarity system}.
Then the  set $K$  defined as
\begin{equation*}
   K=\displaystyle\bigcap_{n=1}^{\infty}\left(\displaystyle\bigcup_{I\in\mathcal I^n}  \bar{V}_{I}\right),
\end{equation*}
is called an {\it asymptotic self-similar set} in $X$.\\
\end{definition}

Let us consider the case of iterated function system of contracting 
similarity maps   $\{ f_1,\ldots, f_k \}$ with open set condition
\begin{enumerate}
 \item $V\supset f_1(V)\cup \cdots \cup f_k(V);$
 \item  $f_i(V)\cap f_j(V)\neq\emptyset$ for every $i\neq j;$
\end{enumerate}
for some non-empty open set $V\subset X$. In this case, 
for each $I=i_1\cdots i_n\in\mathcal I^*$, let
\[
      V_I:=f_{i_n}\circ\cdots\circ f_{i_1}(V), \,\,\,   f_I:=f_{i_n}:\bar V_{I'}\to \bar V_{I}. 
\]
Then this gives a  $(\{ \lambda_i\}_{i=1}^k, \varphi=0,\lambda_{\max})$-asymptotic similarity system  
$\{ (\bar V_I, f_I)\}_{I\in \mathcal I}$,
where $\lambda_{\max}=\max \lambda_i$.
Thus the notion of $(\{ \lambda_i\}_{i=1}^k, \varphi,\nu)$-asymptotic similarity system is an 
extension of  iterated function system of contracting similarity maps with open set condition.

%%%%%%%%%%% Main theorem %%%%%%%%%%%%%%%%%%%%%%%%%%%%%%%%%%%%%%%%%%%%%%%%%%%%%%%%%%%%%%%%%%%%%%%%%%%

Our main result in the present paper is stated as follows.

\begin{theorem}\label{T1}
Let $X$ be a complete doubling metric space %with a regular Borel measure $\mu$, 
and let $K$ be the asymptotic self-similar set associated with a 
$(\{ \lambda_i\}_{i=1}^k, \varphi,\nu)$-asymptotic similarity system $\{ (\bar V_I, f_I)\}_{I\in\mathcal I}$.
Then the Hausdorff and the box dimensions of $K$ are given as
\[
       \dim_H K=\dim_B K =s,
\]
where $s$ is a unique number satisfying  
$\displaystyle \sum_{i=1}^k \lambda_i^s =1$.
\end{theorem}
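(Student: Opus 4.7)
The plan is to prove the two dimension inequalities $\dim_B K\le s$ and $\dim_H K\ge s$ separately, following the general Moran/Hutchinson scheme adapted to the asymptotic setting, with the main work being the control of the multiplicative errors coming from $\varphi$ and a doubling-based bounded-overlap argument replacing the classical open set condition.

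The first step is a careful diameter estimate. Iterating the defining inequality of an almost similarity map along a word $I=i_1\cdots i_n$, one obtains for every $I\in\mathcal I^n$
\begin{equation*}
\lambda_{i_1}\cdots\lambda_{i_n}\,|V|\prod_{j=0}^{n-1}\bigl(1-\varphi(|\bar V_{I_{(j)}}|)\bigr)
\le |\bar V_I|\le
\lambda_{i_1}\cdots\lambda_{i_n}\,|V|\prod_{j=0}^{n-1}\bigl(1+\varphi(|\bar V_{I_{(j)}}|)\bigr),
\end{equation*}
where $I_{(j)}=i_1\cdots i_j$ (with $I_{(0)}$ corresponding to $V$). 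Since $|\bar V_{I_{(j)}}|\le\nu^j|V|$ by the shrinking condition, the integral assumption on $\varphi$ in \eqref{eq:varphi} implies $\sum_{j\ge 0}\varphi(\nu^j|V|)<\infty$ (by comparison with $\int_1^\infty\varphi(|V|\nu^x)\,dx$), so both products converge to finite positive constants uniformly in $I$. Thus there exists $C_0\ge 1$ independent of $I$ with $C_0^{-1}\lambda_I|V|\le |\bar V_I|\le C_0\lambda_I|V|$, where $\lambda_I:=\lambda_{i_1}\cdots\lambda_{i_n}$.

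For the upper bound I would cover $K$ by $\{\bar V_I\}_{I\in\mathcal I^n}$ whose mesh tends to $0$ as $n\to\infty$. Using the diameter estimate and the identity $\sum_{i=1}^k\lambda_i^s=1$, one computes
\begin{equation*}
\sum_{I\in\mathcal I^n}|\bar V_I|^s\le C_0^s|V|^s\sum_{I\in\mathcal I^n}\lambda_I^s= C_0^s|V|^s\biggl(\sum_{i=1}^k\lambda_i^s\biggr)^{\!n}=C_0^s|V|^s,
\end{equation*}
which is bounded uniformly in $n$. This shows $\mathcal H^s(K)<\infty$, hence $\dim_H K\le s$, and the same covers give the corresponding box-dimension bound $\overline{\dim}_B K\le s$.

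For the lower bound I would apply the mass distribution principle. Define a Bernoulli-type Borel probability measure $\mu$ supported on $K$ by $\mu(\bar V_I)=\lambda_I^s$ for $I\in\mathcal I^*$ (consistent because $\sum_i\lambda_i^s=1$ and the disjointness of $V_{I'i}$, $V_{I'j}$ for $i\ne j$ ensures the assignment is coherent on cylinders, with the shared boundary set being $\mu$-null). Given a small ball $B(x,r)$ with $x\in K$, stop each descending branch $I$ at the first generation where $|\bar V_I|\le r$; this gives a family $\mathcal F(r)$ of cells with $C_0^{-1}\lambda_{\min}r\le |\bar V_I|\le r$ covering $K$, and the measure of each such cell satisfies $\mu(\bar V_I)=\lambda_I^s\le C r^s$ by the diameter estimate. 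The key step — and the main obstacle — is to bound the number of cells $\bar V_I\in\mathcal F(r)$ that meet $B(x,r)$ by a constant $N$ depending only on the doubling constant of $X$ and on $\lambda_{\min}$, $\nu$, $C_0$. The usual open set condition is not at our disposal; instead, the disjointness of the interiors together with the fact that all cells in $\mathcal F(r)$ meeting $B(x,r)$ have comparable diameter $\asymp r$ and contain a ball of comparable radius (since each cell is the almost-similar image of the fixed open set $V$, so it contains a definite interior ball after scaling) allows one to pack these interior balls in a ball of radius $O(r)$ around $x$; the doubling property of $X$ then bounds their number by $N$. Combining, $\mu(B(x,r))\le NCr^s$, and the mass distribution principle yields $\dim_H K\ge s$, completing the proof together with the trivial inequality $\dim_H K\le\underline{\dim}_B K$.
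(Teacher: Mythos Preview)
Your overall scheme matches the paper's: two-sided diameter control $|\bar V_I|\asymp\lambda_I|V|$ from the convergent products $\prod_j(1\pm\varphi(\nu^j|V|))$, the upper Hausdorff bound via the natural covers, and the lower bound via mass distribution combined with a doubling-based packing lemma bounding the number of comparably-sized cells meeting a given ball. The genuine difference is in how the measure for the lower bound is produced. You take the direct route, essentially the push-forward of the Bernoulli $(\lambda_1^s,\ldots,\lambda_k^s)$ measure from the coding space $\mathcal I^\infty$, so that support on $K$ and the cylinder mass $\lambda_I^s$ come for free. The paper instead first passes to a deep subtree $W=V_{I_0}$ with $|I_0|=n_0$ large (to make the multiplicative error $o(n_0)$ small), then for each ``simple family'' $\mathcal S$ (a section of the tree) constructs the self-similar measure $\mu_{\mathcal S}$ as the unique fixed point of $\mu\mapsto\sum_{I\in\mathcal S}\lambda_I^s(g_I)_*\mu$ on $(\mathcal M(\bar W),d_{\mathcal M}^*)$ via the contraction mapping theorem, and finally proves a separate assertion that $\mu_{\mathcal S}=\mu_m$ for all large $m$ in order to conclude $\supp\mu\subset\tilde K$. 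Your argument is shorter and more elementary; the paper's avoids the auxiliary symbolic space but pays with the extra uniqueness step. Two minor points on your write-up: the level-$n$ covers do \emph{not} directly give $\overline{\dim}_B K\le s$ when the $\lambda_i$ are unequal (they only yield the weaker bound $\log k/\log(1/\lambda_{\max})\ge s$); you need a stopping family with $|\bar V_I|\asymp\epsilon$, exactly your $\mathcal F(r)$, which is also what the paper uses for the upper box bound. And the parenthetical ``the shared boundary set being $\mu$-null'' is circular as written; if you route the ball estimate through the symbolic cylinders (the preimage of $B(x,r)$ under the coding map lies in the union of the cylinders $[I]$ over those $I\in\mathcal F(r)$ with $\bar V_I\cap B(x,r)\neq\emptyset$), the issue disappears.
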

%%%%%%%%%%%%%%%%  Meaning  of Main theorem  %%%%%%%%%%%%%%%%%%%%%%%%%%%%%%%%%%%%%%%%%%

In \cite{Balogh}, Balogh and Rohner suggested a problem.
They considered an iterated function system 
of contracting 
{\it asymptotically similarity maps} in the sense that for all $I=i_i\cdots i_n\in\mathcal I$
\[
       c_1\lambda_I \le \frac{|f_I(x), f_I(y)|}{|x,y|}\le c_2\lambda_I,
\]
where $f_I=f_{i_n}\circ\cdots\circ f_{i_1}$,  $\lambda_I=\lambda_{i_1}\cdots\lambda_{i_n}$
 and $c_1$,  $c_2$ are uniform positive constants.
They posed a problem: What happens if an iterated function system 
of contracting similarity maps is replaced by one of contracting 
asymptotically similarity maps ?
Rajala and Vilppolainen completely solved the above problem in Theorem 4.9 of  \cite{RV}
by introducing a more general notion of  {\it a semiconformal iterated function system}.
A $(\{ \lambda_i\}_{i=1}^k, \varphi,\nu)$-asymptotic similarity system  $\{ (\bar V_I, f_I)\}_{I\in \mathcal I}$ 
is closely related with Balogh and Rohner's  iterated function system of contracting asymptotically 
similarity maps and Rajala and Vilppolainen's  semiconformal iterated function system
under the open set condition. 
Actually our notion of asymptotic similarity 
system  provides a controlled Moran construction in the 
sense of Rajala and Vilppolainen (\cite{RV}) (see Lemma \ref{lem:remarkRV}).
However  an asymptotic self-similar set introduced in the present paper is 
constructed by means of a $(\{ \lambda_i\}_{i=1}^k, \varphi,\nu)$-asymptotic similarity system,
which consists of infinite series of almost similarity maps.
Therefore in general, it is not simply  defined by a finite iterated function system.
For example, a generalized Sierpinski gasket on a general complete surfaces 
constructed in this paper is an asymptotic self-similar set. 
It would be an interesting question to determine whether  a generalized Sierpinski gasket on a general complete surface
can be defined by means of  a finite iterated function system due to 
 Balogh-Rohner or Rajala-Vilppolainen (see Section \ref{sec:Sierp}).
Anyway the notion of asymptotic self-similar sets introduce in this paper has an advantage of
making geometric constructions in general curved spaces  much easier.

%%%%%%%%%%%%%%%%  Sierpinski gasket  %%%%%%%%%%%%%%%%%%%%%%%%%%%%%%%%%%%%%%%%%%
As indicated above,  we consider a Sierpinski gasket  on 
a complete surface $M$ as an application of Theorem \ref{T1}, which is naturally defined in a geometric way as follows.

Now let $\mathcal I=\{ 1, 2, 3\}$, and
let $\Delta$ be a closed domain contained in a convex domain of $M$ bounded by a geodesic triangle. 
By joining the midpoints of the edges of $\Delta$ by minimal geodesics, we divide $\Delta$ into 
four triangles, and remove the center triangle to get three geodesic triangles $\Delta_1$, $\Delta_2$ and
$\Delta_3$. Repeating this procedure for each $\Delta_i$ infinitely many times, 
we obtain a system of geodesic triangles $\{ \Delta_I\}_{I\in\mathcal I^*}$.
	The {\it generalized Sierpinski gasket} $K_{\Delta}$ on $M$  associated with $\Delta$ is defined as
\begin{equation*}
   K_{\Delta}=\displaystyle\bigcap_{n=1}^{\infty}\left(\displaystyle\bigcup_{I\in\mathcal I^n}  \Delta_{I}\right),
\end{equation*}
We say that  $\Delta$ is {\it asymptotically non-degenerate} if all the divided small triangles $\Delta_I$
are $\delta$-non-degenerate for some constant $\delta>0$. (See Section \ref{sec:Sierp}
for the precise definition). For example, every geodesic triangle region $\Delta$ of perimeter 
less than $2\pi$ on a unit sphere is  asymptotically non-degenerate (see Example \ref{ex:sphere}).
We show that a small  geodesic  triangle region on a surface is 
 asymptotically non-degenerate  (see Lemma \ref{lem:nondeg2}).

\begin{theorem} \label{thm:Sierp}
If a geodesic triangle domain $\Delta$ in a convex domain on a complete surface is 
asymptotically non-degenerate, then  
\begin{enumerate}
 \item   for some  $0<\nu<1$ there exists a $(\{ 1/2,1/2,1/2\},\varphi, \nu)$-asymptotic similarity system
 $\{ (\Delta_I,f_I)\}_{I\in \mathcal I^*}$ associated with $\Delta$, where  $\varphi(x)=cx^2$ 
 for some constant $c>0;$
 \item the Hausdorff and box dimensions 
of the generalized Sierpinski gasket $K_{\Delta}$ associated with $\Delta$ 
are  given by
   \begin{equation}
       \dim_H K_{\Delta} = \dim_B K_{\Delta} = \frac{\log3}{\log2}. \label{eq:dimHB}
   \end{equation} 
\end{enumerate}

\end{theorem}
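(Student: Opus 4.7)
The plan is to establish part (1) by constructing each almost similarity map $f_I$ as an exponential-map ``half-scaling'' centered at an appropriate vertex of $\Delta_{I'}$, and then to deduce part (2) immediately from Theorem \ref{T1}. Indeed, once part (1) is in place, the defining equation $\sum_{i=1}^3 (1/2)^s=1$ of $s$ in Theorem \ref{T1} forces $2^s=3$, and hence $s=\log 3/\log 2$, which is \eqref{eq:dimHB}.

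For each multi-index $I=I'i_n$, let $v$ be the vertex of $\Delta_{I'}$ common to $\Delta_I$ (the $i_n$-th corner of the midpoint subdivision). On the ambient convex domain $\exp_v$ is a diffeomorphism onto its image, so I set
\[
  f_I(x):=\exp_v\Bigl(\tfrac{1}{2}\,\exp_v^{-1}(x)\Bigr).
\]
This fixes $v$ and sends each of the other two vertices of $\Delta_{I'}$ to the corresponding geodesic midpoint, mapping the two edges of $\Delta_{I'}$ at $v$ onto the corresponding edges of $\Delta_I$. The image of the opposite edge is not literally the third edge of $\Delta_I$, but differs from it by a curve at $C^0$-distance $O(|\Delta_{I'}|^3)$; a bi-Lipschitz correction in an $O(|\Delta_{I'}|^3)$ collar (or, equivalently, replacing $\Delta_I$ by $f_I(\Delta_{I'})$ from the outset in the inductive construction) makes $f_I:\Delta_{I'}\to\Delta_I$ a homeomorphism without affecting the estimates below. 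The open-set disjointness $V_{I'i}\cap V_{I'j}=\emptyset$ for $i\neq j$ is immediate from the Sierpinski subdivision.

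The heart of the argument is the almost similarity inequality. Working on a compact convex region whose sectional curvature satisfies $|K|\le K_0$, Jacobi field comparison gives
\[
  \bigl|\,|d\exp_v|_{\exp_v^{-1}(p)}(w)| - |w|\,\bigr| \;\le\; C K_0\,|\Delta_{I'}|^2\,|w|
\]
for every $p\in\Delta_{I'}$ and $w\in T_pM$. Combining this with the Euclidean identity $|\tfrac{1}{2}u-\tfrac{1}{2}u'|=\tfrac{1}{2}|u-u'|$ on $T_vM$, and applying Jacobi comparison a second time to relate $d(f_I(x),f_I(y))$ to $|\exp_v^{-1}(f_I(x))-\exp_v^{-1}(f_I(y))|$, one arrives at
\[
  \left|\frac{d(f_I(x),f_I(y))}{d(x,y)}-\frac{1}{2}\right| \;\le\; \frac{c}{2}\,|\Delta_{I'}|^2,
\]
which is the required almost similarity inequality with $\lambda_{i_n}=1/2$ and $\varphi(t)=ct^2$; such $\varphi$ visibly satisfies \eqref{eq:varphi}. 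A parallel computation yields $|\Delta_I|\le \tfrac{1}{2}(1+c|\Delta_{I'}|^2)|\Delta_{I'}|$, so after shrinking the initial diameter if necessary we obtain a uniform contraction ratio $\nu\in(1/2,1)$ valid at every scale.

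The main technical obstacle is the \emph{uniformity} of the constant $c$ across all $I\in\mathcal I^*$: the Jacobi comparison estimates degrade as the triangles develop angles approaching $0$ or $\pi$, which could in principle happen under iterated subdivision on a curved surface. This is precisely where the asymptotic non-degeneracy hypothesis is used: it guarantees that every $\Delta_I$ is $\delta$-non-degenerate for a fixed $\delta>0$, so all relevant angles remain bounded away from the extremes and the comparison constants stay uniformly bounded. With the system $\{(\Delta_I,f_I)\}_{I\in \mathcal I^*}$ thus established as a $(\{1/2,1/2,1/2\},\varphi,\nu)$-asymptotic similarity system, Theorem \ref{T1} applies and yields $\dim_H K_\Delta=\dim_B K_\Delta=\log 3/\log 2$, completing part (2).
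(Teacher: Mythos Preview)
Your overall strategy---build the system and then invoke Theorem~\ref{T1}---is the paper's strategy as well, but your construction of the maps $f_I$ is genuinely different from the paper's and the boundary mismatch it creates is not adequately repaired.

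The paper does \emph{not} use the vertex-centered rescaling $\exp_v\circ\tfrac12\circ\exp_v^{-1}$. Instead it parametrizes $\Delta$ by $\varphi(t,s)=\sigma_s(t)$, where $\sigma_s$ is the geodesic from $\gamma_2(s)$ to $\gamma_3(s)$, and sets $f_1(\varphi(t,s))=\varphi(t,s/2)$. This map carries $\bar\Delta$ \emph{exactly} onto $\bar\Delta_1$: the two edges through $p_1$ go to their first halves, and the opposite edge $\sigma_1$ goes to the geodesic $\sigma_{1/2}$, which is by definition the third edge of $\Delta_1$. No boundary correction is required. The almost-similarity bound is then obtained from a Jacobi-field analysis along the transversal family $\sigma_s$ (Rauch comparison for the tangential direction, and Lemmas~\ref{lem:angle1}--\ref{lem:angle2} via Toponogov for the angles governing the normal direction); the $\delta$-non-degeneracy is used to keep the angle between $J_s$ and $T_s$ uniformly bounded away from $0$ and $\pi$.

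Your exponential-map construction does give the pointwise estimate $\bigl|\,d(f_I x,f_I y)/d(x,y)-\tfrac12\,\bigr|\le C|\Delta_{I'}|^2$ cleanly, but neither of your two fixes for the image mismatch works as written. A correction supported in a collar of width $O(|\Delta_{I'}|^3)$ that slides points by $O(|\Delta_{I'}|^3)$ has bi-Lipschitz constant bounded only by a fixed number, not by $1+O(|\Delta_{I'}|^2)$; to get the latter the collar must have width comparable to the inradius, i.e.\ of order $|\Delta_{I'}|$, and this is exactly where the $\delta$-non-degeneracy would enter in your approach. The alternative of ``replacing $\Delta_I$ by $f_I(\Delta_{I'})$ from the outset'' is more seriously flawed: the new regions are no longer geodesic triangles, so the next step of the midpoint subdivision (and hence the inductive definition of $f_{Ij}$) is undefined; their interiors need not be disjoint; and the resulting limit set is not the Sierpinski gasket $K_\Delta$ of the statement, so Theorem~\ref{T1} would compute the dimension of the wrong object. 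If you want to salvage the exponential-map idea, the honest route is to use a collar of width $\sim c(\delta)|\Delta_{I'}|$ and carry the $(1+O(r^2))$-bi-Lipschitz bound for the correction through explicitly.
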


The following result gives a condition for $\Delta$ to be asymptotically non-degenerate.

\begin{corollary}\label{thm:nondeg}
A geodesic triangle domain $\Delta$ in a convex domain on a complete surface is 
asymptotically non-degenerate if and only if for some  $0<\nu<1$
there exists a $(\{ 1/2,1/2,1/2\},\varphi, \nu)$-asymptotic similarity system
 $\{ (\Delta_I,f_I)\}_{I\in \mathcal I^*}$ associated with $\Delta$, where  $\varphi(x)=cx^2$ 
 for some constant $c>0$.
\end{corollary}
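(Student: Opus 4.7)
The forward implication is precisely the content of part (1) of Theorem \ref{thm:Sierp} applied to the given $\Delta$, so the work lies in the converse. Assume we are given a $(\{1/2,1/2,1/2\},\varphi,\nu)$-asymptotic similarity system $\{(\Delta_I, f_I)\}_{I\in\mathcal{I}^*}$ with $\varphi(x)=cx^2$, and the task is to extract a uniform constant $\delta>0$ for which every $\Delta_I$ is $\delta$-non-degenerate.

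My plan is to track the ``shape'' of the triangles $\Delta_I$ along the chains of almost similarity maps. Choose a scale-invariant shape functional $\sigma(\Delta_I)$ quantifying non-degeneracy (for example, the ratio of the shortest side to the longest side, or the ratio of area to squared diameter, or the minimum interior angle normalized appropriately); any such choice is equivalent, up to bounded factors, to the definition of $\delta$-non-degeneracy used in Section \ref{sec:Sierp}. I would first prove a one-step distortion lemma: if $f_I : \bar\Delta_{I'} \to \bar\Delta_I$ is a $(1/2, \varphi(|\bar\Delta_{I'}|), \nu)$-almost similarity map, then by applying the distortion bound to the three pairs of vertices, each side of $\Delta_I$ is within a multiplicative factor $1\pm\varphi(|\bar\Delta_{I'}|)$ of $1/2$ times the corresponding side of $\Delta_{I'}$, which yields
\begin{equation*}
   \bigl|\log \sigma(\Delta_I) - \log \sigma(\Delta_{I'})\bigr| \leq C_0\,\varphi(|\bar\Delta_{I'}|)
\end{equation*}
for an absolute constant $C_0$ depending only on the chosen shape functional.

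The next step is to telescope along an arbitrary chain $\Delta \supset \Delta_{i_1}\supset \Delta_{i_1 i_2} \supset \cdots \supset \Delta_I$. Because the almost similarity maps are contractions of ratio close to $1/2$ with $\nu<1$, one has $|\bar\Delta_{i_1\cdots i_k}| \leq \nu^{k}|\bar\Delta|$, so with $\varphi(x)=cx^2$ the accumulated error is bounded by a geometric series:
\begin{equation*}
   \sum_{k=0}^{\infty} \varphi\bigl(|\bar\Delta_{i_1\cdots i_k}|\bigr) \;\leq\; c\,|\bar\Delta|^2 \sum_{k=0}^{\infty} \nu^{2k} \;=:\; M<\infty.
\end{equation*}
Iterating the one-step lemma gives $|\log \sigma(\Delta_I) - \log \sigma(\Delta)| \leq C_0 M$, so
\begin{equation*}
   \sigma(\Delta_I) \;\geq\; \sigma(\Delta)\,e^{-C_0 M} \;=:\; \delta_0 \;>\;0,
\end{equation*}
uniformly in $I\in\mathcal I^*$. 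Converting $\sigma$ back into the notion of $\delta$-non-degeneracy used in the paper produces a uniform $\delta>0$ as required, completing the converse.

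The main potential obstacle is the one-step shape-distortion estimate: one has to verify carefully that a $(1/2,\e,\nu)$-almost similarity map between \emph{curved} geodesic triangles really induces an $O(\e)$ change in the chosen scale-invariant shape functional. Sides of $\Delta_I$ are images of geodesic segments between vertices of $\Delta_{I'}$, and these images need not themselves be geodesics, so one must argue that the length of the side of $\Delta_I$ is still within $(1\pm O(\e))\cdot\tfrac12$ of the length of the corresponding side of $\Delta_{I'}$. This follows from the lower bound on $d(f(x),f(y))$ built into the almost similarity condition together with the upper bound obtained by evaluating along the image of the original geodesic; with this observation in hand, the rest of the argument is a clean geometric-series summation.
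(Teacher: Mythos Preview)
Your approach is correct and follows the same core idea as the paper: control the shape distortion of the triangles through the bi-Lipschitz bounds coming from the almost-similarity maps, summed (telescoped) along chains using the convergence of $\sum_k \varphi(\nu^k|\Delta|)$. The paper packages the telescoping by quoting Lemma~\ref{lem:g-dil} directly (so the composed map $g_J$ already satisfies $|d(g_J(x),g_J(y))/d(x,y)-\lambda_J|<o\,\lambda_J$), and then takes as shape functional the ratio $|W_J|/\mathrm{inrad}(W_J)$ rather than side-length ratios. That choice is slightly cleaner than yours because it is purely metric and sidesteps entirely the worry you raise in your last paragraph about whether images of sides are geodesics or whether $f_I$ maps vertices to vertices: the diameter and inradius of $W_J=g_J(W)$ are controlled directly by the global bi-Lipschitz constant of $g_J$, with no reference to the triangle structure. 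Your version works too once one notes that the specific maps $f_I$ constructed in Section~\ref{sec:Sierp} do send vertices to vertices, so the side of $\Delta_I$ is simply the geodesic between $f_I(p)$ and $f_I(q)$ and its length is $d(f_I(p),f_I(q))$, bounded above and below directly by the almost-similarity inequality; the detour through ``image of the original geodesic'' is unnecessary.
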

 
The organization of the present paper is as follows:
In Section \ref{sec:prelim}, we discuss some basic notions needed in the proof of the above results.
In Section \ref{sec:main}, we prove Theorem \ref{T1}.
In Section \ref{sec:Sierp}, we discuss generalized Sierpinski gaskets on complete surfaces, 
and prove Theorem \ref{thm:Sierp} and Corollary \ref{thm:nondeg}.

%%%%%%%%%%%%%%%%%%%% Acknowlegement %%%%%%%%%%%%%%%%%%%%%%%%%%%%%%%%%%%%%%%
The authors would like to thank Ayato Mitsuishi for a comment on Example \ref{ex:sphere}.
We would also like to thank the referee for valuable advice.

%%%%%%%%%%%%%%%%%  \section{preliminaries}  %%%%%%%%%%%%%%%%%%%%%%%%%%%%%%%%%%%%%%%%%%
\section{preliminaries}\label{sec:prelim}
%%%%%% doubling %%%%%%%%%

The distance between points $x,y$ in a metric space will be denoted as  $d(x,y)$.
 For $r>0$, $B(x,r)$ denotes the open ball of radius $r$ around $x$.

\begin{definition} \upshape
A metric space $X$ is said to be {\it doubling} if 
there exists a positive integer $C$ such that for any $x\in X$ and any $r>0$,
there exist $\{ x_i\}_{i=1}^{C} \subset X$ such that 
\[
B(x,r)\subset \displaystyle\bigcup_{i=1}^{C}B(x_i, r/2)
\]
Note that $C$, called the {\it doubling constant} of $X$, does not dependent on the choices of $x$ or $r$.
\end{definition}

For the proof  of the following lemma, see   Lemma 3.3 of \cite{Balogh}.

\begin{lemma} \label{lem:doubling}
 Let $X$ be a doubling metric space with doubling constant $C$.
For any $0<\delta<1$, t�here exists a constant $C(\delta)$ such that 
the number of mutually disjoint balls $B(x_i, \delta r)$ in a ball $B(x, r)$ of $X$
is bounded by $C(\delta)$.
\end{lemma}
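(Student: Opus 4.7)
The plan is to reduce the assertion to a standard packing-versus-covering argument based on iterating the doubling property. First I would apply the doubling hypothesis repeatedly $n$ times to cover $B(x,r)$ by at most $C^n$ balls of radius $r/2^n$. Choosing the least $n$ with $r/2^n \le \delta r/2$, equivalently $n \ge \log_{2}(2/\delta)$, yields a cover of $B(x,r)$ by $N := C^{\lceil \log_{2}(2/\delta)\rceil}$ balls each of radius $\delta r/2$.

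Given any finite collection of mutually disjoint balls $\{B(x_i,\delta r)\}_i$ with centers $x_i \in B(x,r)$, I would show that each ball of the covering contains at most one center $x_i$. Indeed, if two distinct centers $x_i, x_j$ were to lie in a common ball of radius $\delta r/2$, the triangle inequality would give $d(x_i,x_j) \le \delta r$, contradicting the separation $d(x_i,x_j) \ge 2\delta r$ forced by disjointness of the original balls $B(x_i,\delta r)$.

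Pigeonholing the centers among the $N$ covering balls then yields the desired bound, with $C(\delta) := C^{\lceil \log_{2}(2/\delta)\rceil}$, a constant depending only on $C$ and $\delta$. The argument is essentially formal; the only point requiring care is matching the two scales---shrinking the covering radius to half of $\delta r$ so that disjointness at separation $2\delta r$ is genuinely violated---which is what produces the factor of $2$ inside the logarithm and fixes the explicit form of $C(\delta)$.
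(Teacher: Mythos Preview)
Your argument is the standard iteration-plus-pigeonhole proof and is essentially correct; the paper does not supply its own argument but simply refers to Lemma~3.3 of \cite{Balogh}, whose proof follows the same route.

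One small correction is worth flagging. In a general metric space, disjointness of the open balls $B(x_i,\delta r)$ and $B(x_j,\delta r)$ does \emph{not} force $d(x_i,x_j)\ge 2\delta r$; that implication uses the existence of midpoints and fails, e.g., in discrete spaces. What disjointness does give is $x_j\notin B(x_i,\delta r)$, hence $d(x_i,x_j)\ge \delta r$. Fortunately this weaker bound is exactly what your pigeonhole step needs: two centers lying in a common covering ball of radius $\delta r/2$ would satisfy $d(x_i,x_j)<\delta r$, already a contradiction. So your choice to shrink the covering radius to $\delta r/2$ was right, and the constant $C(\delta)=C^{\lceil \log_2(2/\delta)\rceil}$ stands; only the stated reason (``separation $\ge 2\delta r$'') should be replaced by ``separation $\ge \delta r$''.
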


%%%%%% H-measure %%%%%%%%%
\begin{definition} \upshape
Let $X$ be a metric space,\ $A\subset X$ and $\alpha$ be a nonnegative real number.
An {\it $\epsilon$-cover} $\{U_i \}$ of $A$ is a finite or countable collection of sets $U_i$  covering $A$ with $|U_i|\le\epsilon$.
Define $\mathcal{H}_{\epsilon}^{\alpha}(A)$  by
\begin{equation*}
\mathcal{H}_{\epsilon}^{\alpha}(A)=\inf\Big\{\displaystyle\sum_{i=1}^{\infty}|U_i|^{\alpha}\ \big|\ 
    \{U_i \}:\ \epsilon\text{-cover of} \, \,A\Big\}.
\end{equation*}
The {\it $\alpha$-dimensional Hausdorff measure of} $A$ is defined by  
\begin{equation*}
\mathcal{H}^{\alpha}(A)=\displaystyle \lim_{\epsilon \to 0}\mathcal{H}_{\epsilon}^{\alpha}(A),
\end{equation*}
and the {\it Hausdorff dimension $\dim_H A$ of} $A$ is defined as
\begin{equation*}
\dim_H  A:={\rm sup} \{\alpha\ge 0 |\mathcal{H}^{\alpha}(A)=\infty\}={\rm inf} \{\alpha\ge 0 |\mathcal{H}^{\alpha}(A)=0 \}.
\end{equation*}
\end{definition}
%%%%%% B-dimension %%%%%%%%%
Let $A$ be a bounded subset of a metric space $X$.
Let $N_{\epsilon}(A)$ denote the minimal number of subsets of 
diameter $\le \epsilon$ needed to cover $A$.
The {\it  lower box dimension} and the {\it upper box dimension} of $A$
are defined respectively as 
\begin{align*}
 \underline{\dim}_B A  =  \mathop{\underline{\lim}}_{\epsilon\to 0} \frac{\log N_{\epsilon}(A)}{-\log\epsilon}, \,\,\,
 \overline{\dim}_B A  =  \mathop{\overline\lim}_{\epsilon\to 0} \frac{\log N_{\epsilon}(A)}{-\log\epsilon}.
\end{align*}
When both the lower and the upper box dimensions are equal, the common value 
\begin{align*}
 \dim_B A =  \lim_{\epsilon\to 0} \frac{\log N_{\epsilon}(A)}{-\log\epsilon}
\end{align*}
is called the {\it box dimension} of $A$.

The following is a standard fact (see \cite{Fal2} for instance):

%\begin{lemma} \label{lem:box}
 \begin{align}
    \dim_H A \le  \underline{\dim}_B A \le \overline{\dim}_B A. \label{eq:dims}
\end{align}
%\end{lemma}

%%%%%% self-similar measure %%%%%%%%%

Next we  discuss self-similarity measures.
In the rest of this section,  we always assume that $Y$ is a compact metric space 
unless otherwise stated.

Let $\mathcal M(Y)$ be the set of all Borel probability measures on $Y$.
Consider the {\it  Kantrovich-Rubinshtein metric} $d_{\mathcal M}$ and 
 the {\it modified Kantrovich-Rubinshtein metric} $d_{\mathcal M}^*$ on $\mathcal M(Y)$ 
defined by
\begin{align*}
   d_{\mathcal M}(\mu_1,\mu_2) 
      =& \sup \left\{ \left| \int_{Y}\phi \,d\mu_1 -  \int_{Y}\phi \,d\mu_2 \right| :
     \phi\in\mathrm{Lip}_1(Y), \,\sup_{x\in Y} |\phi(x)|\le 1    \right\}, \\
   d_{\mathcal M}^*(\mu_1,\mu_2) 
      = & \sup \left\{ \left| \int_{Y}\phi \,d\mu_1 -  \int_{Y}\phi \,d\mu_2 \right| : 
     \phi\in\mathrm{Lip}_1(Y)    \right\},\\
\end{align*}
where $\mathrm{Lip}_1(Y)$ denotes the set of all Lipschitz functions on $Y$ 
with Lipschitz constant $\le 1$.

It is well known that  $(\mathcal M(Y), d_{\mathcal M})$
is complete (see Theorem 8.10.43 of \cite{Bog}). Further,  we have from the definition
\[
      d_{\mathcal M}(\mu_1,\mu_2)\le  d_{\mathcal M}^*(\mu_1,\mu_2)
                      \le \max\{ |Y|,1\}  d_{\mathcal M}(\mu_1,\mu_2).
\]
In particular,  $(\mathcal M(Y), d_{\mathcal M}^*)$ is also complete.
  
%By Riesz's representation formula, $\mathcal M(X)$ is a complete metric space.

Let $\{f_i\}_{i=1}^m $ be a family of contracting maps in a compact metric space $Y$.
Namely, there are some constants $0<\lambda_i<1$ such that
\[
        \frac{d(f_i(x), f_i(y))}{d(x,y)}\le \lambda_i <1,
\]
 for every $x\neq y\in Y$ and $1\le i\le m$. 
%
%Then 
%there exists a compact subset $K$ of $X$ such that  
%\[
%     K=f_1(K)\cup\cdots\cup f_m(K).
%\]
%The set $K$ is called the  {\it invariant  set} of  $\{ f_i \}_{i=1}^m$
%in general.
%  
\begin{lemma}(cf. \cite{KT})  \label{lem:similar-measure} 
Let $Y$ and  $\{f_i\}_{i=1}^m $ be as above. 
 Then  for  any positive numbers
   $a_i$, $1\le i\le m$, with $\sum_{i=1}^m a_i=1$, 
   there exists a unique Borel probability measure $\mu_0$ such that 
     \[
       \mu_0(A)=a_1\mu_0(f_1^{-1}(A))+\cdots+a_m\mu_0(f_m^{-1}(A))
     \]
   for every measurable subset $A\subset Y$. In other words, 
   \[
          \mu_0 = \sum_{i=1}^m a_i (f_i)_*(\mu_0),
   \]
   where $ (f_i)_*(\mu_0)$ is the push-forward measure of $\mu_0$ by $f_i$.
%\end{enumerate}
\end{lemma}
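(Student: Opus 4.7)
The plan is to apply the Banach fixed point theorem on the complete metric space $(\mathcal M(Y), d_{\mathcal M}^*)$ to the Markov-type operator
\[
   T: \mathcal M(Y) \to \mathcal M(Y), \qquad T\mu := \sum_{i=1}^m a_i (f_i)_*\mu.
\]
First I would verify that $T$ is well-defined: each push-forward $(f_i)_*\mu$ is a Borel probability measure on $Y$ (since $f_i:Y\to Y$ is continuous and $f_i(Y)\subset Y$), and since the $a_i$ are non-negative with $\sum_i a_i = 1$, the convex combination $T\mu$ is again in $\mathcal M(Y)$.

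The key step is to show $T$ is a strict contraction with respect to $d_{\mathcal M}^*$. Set $\lambda_{\max} = \max_i \lambda_i < 1$. For any $\phi \in \mathrm{Lip}_1(Y)$ and any $\mu_1, \mu_2 \in \mathcal M(Y)$, the change of variables formula gives
\[
   \int_Y \phi \, d(T\mu_j) = \sum_{i=1}^m a_i \int_Y (\phi \circ f_i)\, d\mu_j, \qquad j=1,2.
\]
Since $f_i$ contracts by $\lambda_i$ and $\phi$ is $1$-Lipschitz, $\phi\circ f_i$ is $\lambda_i$-Lipschitz, so $\lambda_i^{-1}\phi\circ f_i \in \mathrm{Lip}_1(Y)$. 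Consequently,
\[
   \left| \int_Y \phi \, d(T\mu_1) - \int_Y \phi \, d(T\mu_2) \right|
   \le \sum_{i=1}^m a_i \lambda_i \, d_{\mathcal M}^*(\mu_1,\mu_2)
   \le \lambda_{\max} \, d_{\mathcal M}^*(\mu_1,\mu_2).
\]
Taking the supremum over $\phi \in \mathrm{Lip}_1(Y)$ yields $d_{\mathcal M}^*(T\mu_1, T\mu_2) \le \lambda_{\max} \, d_{\mathcal M}^*(\mu_1,\mu_2)$.

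Since $(\mathcal M(Y), d_{\mathcal M}^*)$ is complete (noted immediately above the lemma) and $T$ is a contraction, the Banach fixed point theorem produces a unique $\mu_0 \in \mathcal M(Y)$ with $T\mu_0 = \mu_0$. Unwinding the definition of $T$ and using $((f_i)_*\mu_0)(A) = \mu_0(f_i^{-1}(A))$ for any Borel set $A \subset Y$ gives exactly the stated identity. I expect the only mild subtlety to be choosing the metric carefully: using $d_{\mathcal M}^*$ rather than $d_{\mathcal M}$ avoids having to renormalize test functions to keep both the Lipschitz and sup-norm bounds $\le 1$, so the contraction estimate comes out cleanly with constant $\lambda_{\max}$.
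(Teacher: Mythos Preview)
Your proof is correct and follows essentially the same approach as the paper: define the operator $T\mu=\sum_i a_i(f_i)_*\mu$, show it is a $\lambda_{\max}$-contraction on the complete space $(\mathcal M(Y),d_{\mathcal M}^*)$ via the Lipschitz bound on $\phi\circ f_i$, and apply the Banach fixed point theorem. Your write-up is in fact more explicit than the paper's, including the well-definedness check and the remark on why $d_{\mathcal M}^*$ is preferable to $d_{\mathcal M}$.
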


\begin{proof}
Define the map 
$F^*(a_1,\ldots,a_m):(\mathcal M(Y), d_{\mathcal M}^*) \to (\mathcal M(Y), d_{\mathcal M}^*)$
by 
\[
   F^*(a_1,\ldots,a_m)(\mu) = \sum_{i=1}^m a_i (f_i)_*(\mu).
\]
If $\phi\in \mathrm{Lip}_1(Y)$, $\phi\circ f_i$ has Lipschitz constant $\le \lambda_{\max}$,
where $\lambda_{\max} =\max\{ \lambda_1,\ldots,\lambda_m\}$.
This implies that  $F^*(a_1,\ldots, a_m)$ is 
$\lambda_{\max}$-contracting,
Since $(\mathcal M(Y), d_{\mathcal M}^*)$ is complete, 
it has a fixed point $\mu_0$ in $\mathcal M(K)$ by the contraction mapping theorem. 
This completes the proof.
\end{proof}

%%%%%%%%%%%%%%%%%  \section{Proof of Main Theorem} %%%%%%%%%%%%%%%%%%%%%%%%%%
\section{Proof of  Theorem \ref{T1}} \label{sec:main}

Let $K$ be the asymptotic self-similar set in  a complete doubling metric space 
 $X$  associated with a $(\{ \lambda_i\}_{i=1}^k, \varphi,\nu)$-asymptotic 
similarity system
 $\{ (\bar V_I, f_I) \}_{I\in\mathcal I^*}$.
For each $I=i_1\cdots i_n\in \mathcal I^*$, we set
\[
   g_I:= f_I\circ \cdots \circ f_{i_1i_2}\circ f_{i_1}:\bar V \to \bar V, \,\,\,
        \bar V_I :=g_I(\bar V)\subset \bar V.
\]
Note that
\begin{equation}
        |V_I|\le \nu^{|I|}|V|.      \label{eq:nu}
\end{equation}

Let $s$ be a unique solution of  $\displaystyle \sum_{i=1}^k \lambda_i^s=1$

\begin{lemma} \label{lem:converge}
Let $\varphi:(0,\infty)\to [0,\infty)$ be a continuous function satisfying the conditions
\eqref{eq:varphi}.  Then
\begin{align*}
     \mathop{\Pi}\limits_{i=0}^{\infty}  (1+\varphi(\nu^i|V|) < \infty,\,\,\,\, 
      \mathop{\Pi}\limits_{i=0}^{\infty}  (1-\varphi(\nu^i|V|)  >0.
\end{align*} 
\end{lemma}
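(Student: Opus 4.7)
The plan is to reduce both infinite products to convergence of the single series
\[
    S := \sum_{i=0}^{\infty} \varphi(\nu^i|V|),
\]
and then to establish convergence of $S$ via an integral comparison using the hypothesis \eqref{eq:varphi}(2).

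First I would recall the standard fact that for nonnegative reals $a_i$ with $a_i < 1$, the product $\prod(1+a_i)$ is finite and $\prod(1-a_i)$ is strictly positive if and only if $\sum a_i < \infty$. Since $\varphi$ is continuous with $\lim_{x\to +0}\varphi(x)=0$, we may choose an integer $i_0$ so that $\varphi(\nu^i|V|) \le 1/2$ for all $i\ge i_0$; the finitely many factors with $i<i_0$ contribute a finite positive constant to the first product and, provided each such factor $1-\varphi(\nu^i|V|)$ is strictly positive (as implicitly required in the almost similarity condition), a strictly positive constant to the second. Thus both product convergence statements reduce to showing $\sum_{i\ge i_0}\varphi(\nu^i|V|)<\infty$, i.e.\ $S<\infty$.

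Next I would apply the integral test. Because $\varphi$ is increasing and the map $x\mapsto \nu^x$ is strictly decreasing for $0<\nu<1$, the function $x\mapsto \varphi(|V|\nu^x)$ is monotone decreasing on $[0,\infty)$. Hence
\[
    \sum_{i=1}^{\infty}\varphi(|V|\nu^i) \le \int_0^{\infty}\varphi(|V|\nu^x)\,dx,
\]
so $S$ is controlled by a finite initial term plus the above integral. The hypothesis \eqref{eq:varphi}(2) gives finiteness of $\int_1^{\infty}\varphi(a\nu^x)\,dx$ for some $a>0$ and $0<\nu<1$, and as the paper notes, this property is independent of the choice of $a$ and $\nu$. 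Specializing to $a=|V|$ (with the same $\nu$) yields $\int_1^{\infty}\varphi(|V|\nu^x)\,dx<\infty$, which together with the bounded integral over $[0,1]$ shows $S<\infty$.

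Combining the two steps gives the convergence of both products, with the second being strictly positive, which is the claim. The argument is essentially a routine integral test, and I do not foresee a real obstacle; the only minor care point is justifying the reduction for the product $\prod(1-\varphi(\nu^i|V|))$, which requires that each individual factor is positive. This is handled by separating off the finitely many initial indices before $\varphi(\nu^i|V|)$ drops below $1$ (which it does eventually because $\varphi(\nu^i|V|)\to 0$), and appealing to the implicit assumption $\varphi(|V|)<1$ that is needed for the defining inequality of an almost similarity map to yield nondegenerate contracting behavior on $\bar V$.
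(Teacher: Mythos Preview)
Your argument is correct and follows essentially the same route as the paper: both reduce the two product statements to finiteness of the series $\sum_{i}\varphi(\nu^i|V|)$ (the paper via the inequalities $\log(1+x)\le x$ and $\log(1-x)\ge -2x$, you via the equivalent standard product--sum criterion), and then invoke the integral hypothesis \eqref{eq:varphi}. The paper's proof is simply terser, asserting the series convergence directly ``by the condition on $\varphi$'' without spelling out the monotone integral comparison you make explicit.
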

\begin{proof}
By the condition on $\varphi$, we have
\begin{align*}
    \sum_{i=0}^{\infty} \log (1+\varphi(\nu^i|V|)) \le  \sum_{i=0}^{\infty} \varphi(\nu^i|V|) <\infty.
\end{align*}
Similarly we have 
\begin{align*}
    \sum_{i=0}^{\infty} \log (1-\varphi(\nu^i|V|)) \ge -2 \sum_{i=0}^{\infty} \varphi(\nu^i|V|) > -\infty.
\end{align*}
These complete the proof.
\end{proof}

%%%%%%%%%%%%%%%%% $\dim_H K\le s$   %%%%%%%%%%%%%%%%%%%%%%%%%%

Let $I=i_1\cdots i_{m-1} i_m\in \mathcal I^*$.
We use the notation 
\[
        I_{-} = i_1\cdots i_{m-1},  
\]
and write naturally like $I=I_{-}i_m$ as before.

\begin{lemma}
$\dim_HK\le s$
\end{lemma}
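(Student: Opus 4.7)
The plan is to use the natural coverings $\{\bar V_I\}_{I\in\mathcal I^n}$ of $K$ (which cover $K$ by definition) and show that $\sum_{I\in\mathcal I^n} |\bar V_I|^s$ is bounded by a constant independent of $n$; since the maximal diameter in these covers tends to $0$ by \eqref{eq:nu}, this forces $\mathcal H^s(K)<\infty$ and hence $\dim_H K \le s$.

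First I would control $|\bar V_I|$ inductively. Writing $I=I_{-}i_n$ and applying the almost-similarity inequality (1) for $f_I:\bar V_{I_-}\to \bar V_I$ to a pair of points realizing (up to arbitrary small slack) the diameter of $\bar V_{I_-}$, one obtains
\[
   |\bar V_I| \le \lambda_{i_n}\bigl(1+\varphi(|\bar V_{I_-}|)\bigr)\,|\bar V_{I_-}|.
\]
Iterating over $j=1,\dots,n$ and setting $\lambda_I:=\lambda_{i_1}\cdots\lambda_{i_n}$ gives
\[
   |\bar V_I| \le \lambda_I\,|\bar V|\,\prod_{j=0}^{n-1}\bigl(1+\varphi(|\bar V_{i_1\cdots i_j}|)\bigr),
\]
with the convention $\bar V_{\emptyset}=\bar V$. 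Using the contracting estimate \eqref{eq:nu}, namely $|\bar V_{i_1\cdots i_j}|\le \nu^{j}|V|$, together with monotonicity of $\varphi$, the infinite product is dominated by $\prod_{j=0}^\infty(1+\varphi(\nu^j|V|))$, which is finite by Lemma \ref{lem:converge}. Call this bound $C$; it is independent of $I$ and of $n$.

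Next I would use the defining identity $\sum_{i=1}^k\lambda_i^s=1$. Raising the above estimate to the $s$-th power and summing over $I\in\mathcal I^n$,
\[
   \sum_{I\in\mathcal I^n}|\bar V_I|^s \le C^s|V|^s \sum_{I\in\mathcal I^n}\lambda_I^s = C^s|V|^s\Bigl(\sum_{i=1}^k\lambda_i^s\Bigr)^n = C^s|V|^s.
\]
Since $\epsilon_n:=\max_{I\in\mathcal I^n}|\bar V_I|\le \nu^n|V|\to 0$, and $\{\bar V_I\}_{I\in\mathcal I^n}$ is an $\epsilon_n$-cover of $K$, this yields $\mathcal H^s_{\epsilon_n}(K)\le C^s|V|^s$ for every $n$, hence $\mathcal H^s(K)\le C^s|V|^s<\infty$, which gives $\dim_H K\le s$.

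The only delicate step is the inductive diameter estimate: the accumulated multiplicative error $\prod_j(1+\varphi(|\bar V_{i_1\cdots i_j}|))$ must be shown to stay uniformly bounded, which is precisely the purpose of the integrability condition \eqref{eq:varphi} on $\varphi$ and the preparatory Lemma \ref{lem:converge}. Everything else is a clean Moran-type computation once this uniform constant $C$ is in hand.
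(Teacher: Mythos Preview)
Your proposal is correct and follows essentially the same approach as the paper: both use the natural coverings $\{\bar V_I\}_{I\in\mathcal I^n}$, the diameter estimate $|\bar V_I|\le\lambda_{i_n}(1+\varphi(|\bar V_{I_-}|))|\bar V_{I_-}|$, the identity $\sum_i\lambda_i^s=1$, and Lemma~\ref{lem:converge} to bound the accumulated product. The only cosmetic difference is that the paper peels off one level at a time in the sum $\sum_{I\in\mathcal I^n}|V_I|^s$, whereas you first iterate the diameter bound down to $|\bar V_I|\le C\lambda_I|V|$ and then sum; the two computations are equivalent.
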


\begin{proof}
By the construction, we have $|V_{i_1 \cdots i_n}|\le|V_{i_1 \cdots i_{n-1}}|\nu$. 
For any $\epsilon>0$ take a sufficiently large $n$ 
such that $\mathcal {U}_n:=\{\ V_{I} \ |\ I\in \mathcal I^n\}$
is an $\epsilon$-cover of $K$.
From the definition of $(\lambda_{i_n}, \varphi, \nu)$-almost similarity map 
$f_I:V_{I'}\to V_{I}$, $I=I' i_n$,  we have
\begin{equation*}
|V_{I}|\le\lambda_{i_n}(1+\varphi(|V_{I'}|)|V_{I'}|.
\end{equation*}
It follows from Lemma \ref{lem:converge} that   
\begin{gather*}
\begin{split}
 \mathcal{H}_{\epsilon}^s(K)
& \le  \sum_{I\in\mathcal I^n} |V_{I}|^s   \\
& =    \sum_{I'\in\mathcal I^{n-1}}(\ |V_{I' 1}|^s      + \cdots  + |V_{I' k}|^s\ )       \\
& \le  \sum_{I'\in\mathcal I^{n-1}}(1+\varphi(|V_{I'}|))^s |V_{I'}|^s (\lambda_1^s +\cdots +\lambda_k^s )   \\
& \le  (1+\varphi({\nu}^{n-1}|V|))^s \sum_{I'\in\mathcal I^{n-1}}|V_{I'}|^s   \\
& \le \cdots  < \mathop{\Pi}\limits_{i=0}^{\infty}(1+  \varphi({\nu}^{i}|V|))^s|V| < C|V|,
\end{split}
\end{gather*}
where $C$ is a constant, 
%Hence $\mathcal{H}^s(K)\le C_0$ for some constant $C_0$, 
and therefore $\dim_H K\le s$.\\
\end{proof}

%%%%%%%%% Covering argument %%%%%%%%%%%%%%%%%%%%%%%%%%%%%%%%%%%%%%%%%%%%%%%%%%%%%%%%%%%%%%%%%%%%
\begin{lemma}\label{L3}
Let $X$ be as in Theorem \ref{T1}, and 
let  $\mathcal{V} =\{ V_i  \}$ be a collection of disjoint open sets 
of $X$ such that each $V_i$ contains a closed ball of radius $c_1\rho$ 
and is included in a closed ball of radius $c_2\rho$
for some positive constants $c_1 <c_2$ and  $\rho$. 
Then every closed $\rho$-ball $\bar B(x,\rho)$in $X$ intersects at most $C(\delta)$ elements of 
$\bar{\mathcal  V}=\{ \bar V_i  \}$,  
where $\delta=\frac{c_1}{c_1+4c_2+2}$ and  $C(\delta)$ is a constant given in Lemma \ref{lem:doubling}.
\end{lemma}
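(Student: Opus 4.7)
The plan is to reduce Lemma 3.1 to the packing estimate of Lemma 2.2, applied to a family of small balls sitting inside each $V_i$ that meets $\bar B(x,\rho)$. Fix the ball $\bar B(x,\rho)$ and let
\[
   J := \{\, i : \bar B(x,\rho) \cap \bar V_i \neq \emptyset \,\}
\]
be the collection of indices whose cardinality we need to bound. For each $i\in J$, using the hypotheses I would pick an inner closed ball $\bar B(p_i, c_1\rho)\subset V_i$, an outer closed ball $\bar B(y_i, c_2\rho)\supset V_i$, and a witness point $z_i \in \bar B(x,\rho)\cap\bar V_i$.

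The key geometric input is a double use of the triangle inequality inside the outer ball. Since $V_i\subset \bar B(y_i, c_2\rho)$ passes to closures, we have $z_i\in \bar V_i\subset \bar B(y_i, c_2\rho)$, and also $p_i\in\bar B(y_i, c_2\rho)$, whence $d(p_i, z_i)\le 2c_2\rho$. Combined with $d(x, z_i)\le\rho$ this gives
\[
   d(x, p_i) \le (1+2c_2)\rho.
\]
Because the $V_i$ are pairwise disjoint, so are the smaller open balls $B(p_i, c_1\rho/2)\subset V_i$, and from the distance bound just obtained each is strictly contained in the open ball $B(x, R)$ of radius $R := (1+2c_2+c_1/2)\rho$.

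Applying Lemma 2.2 to this family of mutually disjoint open balls of common radius $c_1\rho/2$ inside $B(x, R)$ with parameter
\[
   \delta = \frac{c_1\rho/2}{(1+2c_2+c_1/2)\rho}=\frac{c_1}{c_1+4c_2+2}
\]
then produces a cardinality bound $|J|\le C(\delta)$, which is exactly what the lemma claims. I expect the only mildly subtle point to be the containment $\bar V_i\subset \bar B(y_i, c_2\rho)$ used to place the witness $z_i$ in the outer ball (so that the triangle inequality via $y_i$ is available), together with the decision to work with halved inner radii in order to obtain strict containment in an open ball of the claimed size; everything else is routine bookkeeping, ending with a direct invocation of the doubling packing estimate.
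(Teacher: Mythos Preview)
Your argument is correct and follows essentially the same route as the paper: bound $d(x,p_i)\le(1+2c_2)\rho$ via a witness point $z_i$ and the outer ball, then apply the doubling packing estimate of Lemma~\ref{lem:doubling} to the disjoint inner balls. The only difference is that the paper keeps the full inner radius $c_1\rho$ (yielding containment in $B(x,(c_1+2c_2+1)\rho)$ and hence a slightly sharper $\delta$ than stated), whereas your halving to $c_1\rho/2$ is unnecessary but happens to reproduce exactly the value $\delta=\frac{c_1}{c_1+4c_2+2}$ appearing in the statement.
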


\begin{proof}
Take $x_1^i,x_2^i\in X$ satisfying  $\bar B(x_1^i,c_1\rho)\subset V_i\subset \bar B(x_2^i,c_2\rho)$. 
Let $\bar{V}_1,\cdots,\bar{V}_N$ intersect $\bar B(x,\rho)$.\\
Taking any  point $z\in\bar{V_i}\cap \bar B(x,\rho)$,  we have 
\begin{gather*}
\begin{split}
d(x_1^i,x)
\le d(x_1^i,z)+ d(z,x) \le (2c_2+1)\rho.
\end{split}
\end{gather*}
Furthermore, for any $y\in B(x_1^i,c_1\rho)$, we have 
\begin{gather*}
\begin{split}
d(y,x)
\le d(y,x_1^i)+ d(x_1^i,x) < (c_1+2c_2+1)\rho.
\end{split}
\end{gather*}
Thus  we get 
\begin{equation*}
\displaystyle\bigcup _{i=1}^NB(x_1^i,c_1\rho)\subset B\big(x,(c_1+2c_2+1)\rho\big).
\end{equation*}
Since $B(x_1^i,c_1\rho)$ are mutually disjoint,  from Lemma \ref{lem:doubling} we obtain 
the conclusion of the lemma. This completes the proof.
\end{proof}

%%%%%%%  $\dim_H K\ge s$. %%%%%%%%%%%%%%%%%%%%%%%%%%%%%%%%%%%%%%%%%%%%%%%%%%%%%%%%%%%%%%

The rest of this section is mainly devoted to prove the following.

\begin{lemma} \label{lem:from-below}
$\dim_H K\ge s$.
\end{lemma}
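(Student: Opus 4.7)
The plan is to prove the reverse inequality via the standard mass distribution principle: construct a Borel probability measure $\mu$ on $K$ together with a constant $C>0$ such that $\mu(B(x,r)) \le C r^s$ for every $x \in K$ and every sufficiently small $r>0$. Once this is established, $\mathcal H^s(K) \ge C^{-1} \mu(K) > 0$ and hence $\dim_H K \ge s$.

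First, I would construct $\mu$ via the symbolic coding $\pi: \mathcal I^{\mathbb N} \to K$ defined by $\pi(i_1 i_2 \cdots) = \bigcap_n \bar V_{i_1 \cdots i_n}$ (well-defined and continuous by \eqref{eq:nu}). Push forward the Bernoulli measure on $\mathcal I^{\mathbb N}$ with weights $(\lambda_1^s,\dots,\lambda_k^s)$, which sum to $1$ by the choice of $s$, to obtain $\mu$. By construction, $\mu(\bar V_I) \le \lambda_I^s$, where $\lambda_I := \lambda_{i_1}\cdots \lambda_{i_n}$. Next, using the almost-similarity inequality iteratively on the composition $g_I$ together with Lemma \ref{lem:converge}, I would show that there exist uniform constants $0 < c_- \le c_+ < \infty$ with
\begin{equation*}
 c_- \lambda_I |V| \,\le\, |V_I| \,\le\, c_+ \lambda_I |V|.
\end{equation*}
Consequently $\mu(\bar V_I) \le C_0\, |V_I|^s$ for a constant $C_0$ independent of $I$.

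To estimate $\mu(B(x,r))$, introduce the stopping antichain
\begin{equation*}
 \mathcal I(r) := \{ I \in \mathcal I^* \,:\, |V_I| \le r < |V_{I_-}| \},
\end{equation*}
so that $K \subset \bigcup_{I \in \mathcal I(r)} \bar V_I$. A short argument using the disjointness hypothesis at each level of the asymptotic similarity system shows the open sets $\{V_I\}_{I \in \mathcal I(r)}$ are pairwise disjoint (two elements of the antichain cannot lie on the same descending chain). The defining inequality of $\mathcal I(r)$, combined with $|V_I| \ge c_- \lambda_{i_n}|V_{I_-}|/c_+$, gives $c_1 r \le |V_I| \le r$ for a uniform $c_1 > 0$. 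Picking a fixed ball $B(y_0, \rho_0) \subset V$ and using that $g_I$ is essentially $\lambda_I$-bi-Lipschitz together with the openness of $V_I$ in $X$, I would show that each $V_I$ ($I\in\mathcal I(r)$) contains a ball of radius $\ge c_2 r$ about $g_I(y_0)$ and is contained in a ball of radius $\le r$. Lemma \ref{L3} then bounds the number of $I \in \mathcal I(r)$ with $\bar V_I \cap B(x,r) \neq \emptyset$ by an absolute constant $N$, so
\begin{equation*}
 \mu(B(x,r)) \,\le\, \sum_{\substack{I \in \mathcal I(r) \\ \bar V_I \cap B(x,r) \neq \emptyset}} \mu(\bar V_I) \,\le\, N \cdot C_0 \cdot r^s.
\end{equation*}

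The main obstacle is the step that each $V_I$ contains an honest metric ball of radius comparable to $\lambda_I$: the almost-similarity estimate is a bi-Lipschitz statement for the map $g_I$, but transferring a ball inside $V$ to a ball inside $V_I$ in the ambient space $X$ requires that $V_I$ be open in $X$ and that $g_I$ map a chosen interior ball onto a set containing a metric ball of $X$ at $g_I(y_0)$. I would handle this by fixing $y_0$ and $\rho_0 > 0$ with $\bar B(y_0, 2\rho_0) \subset V$, exploiting the openness of $V_I$ in $X$, and using the inverse bi-Lipschitz estimate for $g_I^{-1}$ to show that any ambient-space ball of sufficiently small radius around $g_I(y_0)$ must lie in $g_I(V) = V_I$. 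All other steps reduce to careful bookkeeping with the uniform constants supplied by Lemma \ref{lem:converge}.
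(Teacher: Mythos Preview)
Your approach is correct and more direct than the paper's. The paper first restricts to a deep piece $W=V_{I_0}$ with $n_0$ large, then for \emph{each} covering ball $B_i$ builds a tailored simple family $\mathcal S_i$, invokes the contraction mapping theorem on $(\mathcal M(\bar W),d_{\mathcal M}^*)$ (Lemma~\ref{lem:similar-measure}) to obtain a self-similar measure $\mu_{\mathcal S_i}$ satisfying $\mu=\sum_{I\in\mathcal S_i}\lambda_I^s(g_I)_*\mu$, and then spends a nontrivial argument showing that all these measures coincide with the level-$m$ fixed points $\mu_m$ and hence are supported on $\tilde K$. Your Bernoulli pushforward is supported on $K$ by the very definition of $\pi$, so you bypass both the fixed-point machinery and the uniqueness step entirely; your stopping antichain $\mathcal I(r)$ is precisely the paper's simple family construction, and the counting via Lemma~\ref{L3} is identical. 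What the paper's route buys is an exact invariance identity for $\mu$, which it uses in the one-line estimate $\mu(B_i)\le C(\delta)\max_{I}\lambda_I^s$; you trade this for the elementary cylinder identity $\nu([I])=\lambda_I^s$, which is cheaper.

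One genuine caution: your intermediate claim $\mu(\bar V_I)\le\lambda_I^s$ is not literally correct in this setting, because closures may meet on their boundaries (this is the whole point of the generalized open set condition here), so $\pi^{-1}(\bar V_I)$ can strictly contain the cylinder $[I]$. The easy fix is to skip that claim and argue directly at the symbolic level: since $\{[I]:I\in\mathcal I(r)\}$ partitions $\mathcal I^{\mathbb N}$ and $\pi([I])\subset\bar V_I$, one has
\[
\mu(B(x,r))=\nu\bigl(\pi^{-1}(B(x,r))\bigr)\le\sum_{\substack{I\in\mathcal I(r)\\ \bar V_I\cap B(x,r)\ne\emptyset}}\nu([I])=\sum_{\substack{I\in\mathcal I(r)\\ \bar V_I\cap B(x,r)\ne\emptyset}}\lambda_I^s,
\]
and then your diameter bounds $c_1 r\le|V_I|\le r$ and the count from Lemma~\ref{L3} finish the job exactly as you wrote.
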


We set  
\[
 \bar V^{n}:=\bigcup_{I\in \mathcal I^n} \bar V_I.
\]
Note that 
\[
           K=\bigcap_{n=1}^{\infty} \bar V^{n}.
\]
For a large $n_0$, fix an abitrary $I_0=i_1\cdots i_{n_0}\in\mathcal I^{n_0}$, 
and consider 
\[
             \bar V_{I_0} := g_{I_0}(\bar V)=f_{I_0}\circ \cdots f_{i_1i_2}\circ f_{i_1}(\bar V),\,\,\,
                K_{I_0}:=K\cap V_{I_0}.
\]
It suffices to prove that 
$\dim_H K_{I_0}\ge s$. 
Therefore we start with
\[
                        W:= V_{I_0},
\]
instead of $V$.

For every $1\le i\le k$, put 
\[
     h_i:=f_{I_0i}:\bar W\to \bar W_{i},
\]
where
\[
                \bar W_{i} := h_i(\bar W)\subset \bar W.
\]
Recall from the definition
\[
       \left| \frac{d(h_i(x), h_i(y))}{d(x,y)} - \lambda_i\right| < o(n_0),
\]
for every $x\neq y\in\bar W$, 
where 
\begin{equation}
                      o(n_0)=\lambda_{\max}\varphi(\nu^{n_0}|V|), \label{eq:o}
\end{equation}
and therefore
$\lim_{n_0\to\infty} o(n_0)=0$. 
For $J=j_1\cdot \cdot j_m\in \mathcal I^*$ and every $1\le \ell\le m$, 
 we use the notation 
\[
 h_{j_1\cdot \cdot j_{\ell}}:= f_{Ij_1\cdot \cdot j_{\ell}}
:\bar W_{j_1\cdot \cdot j_{\ell-1}} \to \bar W_{j_1\cdot \cdot j_{\ell}},
\]
as before, and define 
$g_J:\bar W \to \bar W_J$ by 
\[
                           g_J:= h_J\circ \cdots \circ h_{j_1j_2}\circ h_{j_1}.
\]

\begin{lemma} \label{lem:g-dil}
For every $x\neq y\in\bar W$, we have 
\[ 
      \left| \frac{d(g_J(x), g_J(y))}{d(x,y)} - \lambda_J\right| < o(n_0)\lambda_J,
\]
where $\lambda_J=\lambda_{j_1}\cdots\lambda_{j_m}$.
\end{lemma}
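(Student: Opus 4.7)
The plan is to unfold $g_J = h_J \circ \cdots \circ h_{j_1 j_2} \circ h_{j_1}$ as an $m$-fold composition of almost similarity maps, apply the almost-similarity estimate at each stage, and multiply through, then show via Lemma \ref{lem:converge} that the resulting error tends to $0$ as $n_0 \to \infty$.

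First, the $\ell$-th factor $h_{j_1 \cdots j_\ell} : \bar W_{j_1\cdots j_{\ell-1}} \to \bar W_{j_1\cdots j_\ell}$ is, by construction, a $(\lambda_{j_\ell}, \varphi(|\bar W_{j_1\cdots j_{\ell-1}}|), \nu)$-almost similarity map, so for any $u \neq v$ in its source
\[
\lambda_{j_\ell}(1 - \varphi_{\ell-1}) \le \frac{d(h_{j_1\cdots j_\ell}(u), h_{j_1\cdots j_\ell}(v))}{d(u,v)} \le \lambda_{j_\ell}(1 + \varphi_{\ell-1}),
\]
where $\varphi_{\ell-1} := \varphi(|W_{j_1\cdots j_{\ell-1}}|)$. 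Since $W = V_{I_0}$ with $|V_{I_0}| \le \nu^{n_0}|V|$ by \eqref{eq:nu}, iterating \eqref{eq:nu} down the nested tower yields $|W_{j_1\cdots j_{\ell-1}}| \le \nu^{n_0 + \ell - 1}|V|$, and hence $\varphi_{\ell-1} \le \varphi(\nu^{n_0 + \ell - 1}|V|)$ by monotonicity.

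Next I will telescope along the orbit $x_\ell := h_{j_1 \cdots j_\ell}(x_{\ell-1})$, $y_\ell := h_{j_1 \cdots j_\ell}(y_{\ell-1})$, with $x_0 = x$, $y_0 = y$; multiplying the step-wise ratio bounds gives
\[
\lambda_J \prod_{\ell=1}^{m}(1 - \varphi_{\ell-1}) \le \frac{d(g_J(x), g_J(y))}{d(x,y)} \le \lambda_J \prod_{\ell=1}^{m}(1 + \varphi_{\ell-1}).
\]
Both products are dominated by the infinite tail products $\prod_{i=n_0}^{\infty}(1 \pm \varphi(\nu^i|V|))$, which converge by Lemma \ref{lem:converge} and, because the corresponding tails of $\sum_i \varphi(\nu^i |V|)$ vanish, both tend to $1$ as $n_0 \to \infty$. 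Letting $\eta(n_0)$ denote the larger of the two one-sided deviations of these tail products from $1$, we obtain, uniformly in $J$,
\[
\left| \frac{d(g_J(x), g_J(y))}{d(x,y)} - \lambda_J \right| \le \eta(n_0)\, \lambda_J,
\]
with $\eta(n_0) \to 0$, which is the asserted $o(n_0)\,\lambda_J$ bound.

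There is no substantial obstacle here; the only point that needs to be kept straight is that the error of the $\ell$-th map is governed by the diameter of its \emph{source} (at level $n_0 + \ell - 1$ of the nested tower), so when the errors are combined the relevant quantity is precisely the tail of the convergent series from Lemma \ref{lem:converge} — and it is this tail vanishing as $n_0 \to \infty$ that supplies the qualitative $o(n_0)$ factor rather than a constant. A clean alternative is to take logarithms and use $|\log(1 \pm t)| \le 2t$ for small $t$ to convert the product bound into the additive estimate $2\sum_{i=n_0}^{\infty} \varphi(\nu^i|V|)$, which is the most transparent explicit form for $\eta(n_0)$.
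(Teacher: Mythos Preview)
Your proof is correct and follows essentially the same approach as the paper: both telescope the ratio $d(g_J(x),g_J(y))/d(x,y)$ into the product of step-wise ratios, apply the almost-similarity bound at each level (with source diameter $\le \nu^{n_0+\ell-1}|V|$), and dominate by the infinite tail product $\prod_{\ell\ge 0}(1\pm\varphi(\nu^{n_0+\ell}|V|))$, invoking Lemma~\ref{lem:converge}. Your write-up is in fact slightly more explicit about the indexing and the mechanism by which the tail product tends to $1$.
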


\begin{proof}
Put $J_{\ell}:=j_1\cdot \cdot j_{\ell}$ for each $1\le \ell\le m$.
From Lemma \ref{lem:converge}, we obtain
\begin{align*}
  \frac{d(g_J(x), g_J(y))}{d(x,y)}  
        &= \frac{d(g_{J_m}(x), g_{J_m}(y))}{d(g_{J_{m-1}}(x), g_{J_{m-1}}(y))}\cdots
                         \frac{d(g_{J_2}(x), g_{J_2}(y))}{d(g_{J_{1}}(x), g_{J_{1}}(y))} \frac{d(g_{J_1}(x), g_{J_1}(y))}{d(x, y)} \\
        & \le \lambda_J\,  \mathop{\Pi}\limits_{\ell=0}^{\infty} (1+\varphi(\nu^{n_0+\ell}|V|))  \\
        & =   \lambda_J(1+o(n_0)).       
\end{align*}
An estimate from below is  similar, and hence omitted.
\end{proof}

For a small $\epsilon>0$ compared with $|W|$, 
let $\{ U_i\}$ be any $\epsilon$-covering of 
\[
                \tilde K:=K_{I_0}.
\]
Replacing $U_i$ by balls $B_i$ of radius $2|U_i|$, we have
a covering $\{ B_i\}$ of $\tilde K$. Thus 
\[
  \sum |U_i|^s \ge 2^{-s}\sum |B_i|.
\]
Fix $B_i$ and take $c_1>0$ and $c_2>0$ such that
$W$ contains a ball of radius $c_1|W|$
and is contained in a ball of radius $c_2|W|$.

\begin{definition} \upshape
We denote by $\mathcal I^{\infty}$ the set of all 
infinite sequences $J=j_1j_2\cdots $ with $j_{\ell}\in\mathcal I$ 
for all $\ell\ge 1$.
We call a finite subset $\mathcal S$ of  $\mathcal I^*$
a {\it  simple family} if
%%\begin{enumerate}
% \item  
for each $J=j_1j_2\cdots \in \mathcal I^{\infty}$,
   there is a unique $m$ such that $J_m=j_1j_2\cdots j_m\in\mathcal S$.
%\item the set of all $J_m$ as above when $J$ runs over $\mathcal I^{\infty}$ coincides with $\mathcal S.$
%\end{enumerate} 
% the following conditions are satisfied:
%\begin{enumerate}
% \item  $\displaystyle\bigcup_{I\in\mathcal S} \bar W_I \supset\tilde K;$
% \item If $I=i_1\cdots i_{m-1}i_m\in \mathcal S$, then
%neither $I_0=i_1\cdots i_{m-1}$ nor  $I_1=i_1\cdots i_{m-1}i_mi$
%belong to $\mathcal S$ for all $1\le i\le k$.
%\end{enumerate} 
\end{definition}

For instance, $\mathcal I^m$ is  a simple family for every $m\ge 1$.

\begin{lemma} \label{lem:simple}
For every simple family $\mathcal S$, we have
\[
       \sum_{I\in\mathcal S} \lambda_I^s =1.
\]
\end{lemma}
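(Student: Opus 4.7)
The plan is to argue by induction on the ``size'' of $\mathcal S$, collapsing one group of siblings at a time using the defining relation $\sum_{i=1}^k \lambda_i^s = 1$. I would order simple families by the lexicographic pair $(N,N_*)$, where $N:=\max_{I\in\mathcal S}|I|$ and $N_*$ is the number of elements of $\mathcal S$ of length exactly $N$. The goal is to produce, from any $\mathcal S$ with $N\geq 2$, a new simple family $\mathcal S'$ strictly smaller in this order and with the same value of $\sum_{I\in\mathcal S'}\lambda_I^s$; the base case $N=1$ is handled directly.

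For the base case $N=1$, the simple family condition forces $\mathcal S=\mathcal I^1$: given any $j\in\mathcal I$, the infinite sequence $jjj\cdots$ must have a unique prefix in $\mathcal S$ and this prefix has length $\leq N=1$, so $j\in\mathcal S$. Then $\sum_{I\in\mathcal S}\lambda_I^s = \sum_{i=1}^k \lambda_i^s = 1$ by the definition of $s$.

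For the inductive step with $N\geq 2$, I would pick any $I\in\mathcal S$ with $|I|=N$ and write $I=I_-i_N$ with $I_-\in\mathcal I^{N-1}$. The key claim, and the main combinatorial step, is that for every $j\in\mathcal I$ the sibling $I_-j$ also lies in $\mathcal S$. To verify this, take any $J\in\mathcal I^\infty$ extending $I_-j$ and let $J_m$ be its unique prefix in $\mathcal S$. Since $N$ is the maximal length in $\mathcal S$, $m\leq N$. If $m\leq N-1$, then $J_m$ is a prefix of $I_-$, hence a strict prefix of $I\in\mathcal S$; then any infinite extension of $I$ would have both $J_m$ and $I$ as prefixes in $\mathcal S$, contradicting the uniqueness clause of the simple family condition. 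Hence $m=N$ and $J_m=I_-j\in\mathcal S$.

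Now I would replace the block $\{I_-j:j\in\mathcal I\}\subset\mathcal S$ by the single element $I_-$ to form $\mathcal S'$. A short case analysis (according to whether a given $J\in\mathcal I^\infty$ extends $I_-$ or not) shows that $\mathcal S'$ is again a simple family. By the defining equation of $s$,
\[
 \sum_{j\in\mathcal I}\lambda_{I_-j}^s = \lambda_{I_-}^s\sum_{j=1}^k\lambda_j^s = \lambda_{I_-}^s,
\]
so $\sum_{I\in\mathcal S'}\lambda_I^s = \sum_{I\in\mathcal S}\lambda_I^s$. The pair $(N,N_*)$ drops strictly (either $N$ drops, or $N_*$ drops by $k-1\geq 1$), so finite iteration reduces to the base case. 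The only point that requires genuine care is the sibling-completeness claim above; once it is in hand, the rest is mechanical.
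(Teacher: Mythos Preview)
Your proof is correct and follows essentially the same approach as the paper: collapse a block of siblings at the maximal level using $\sum_{j}\lambda_j^s=1$ and induct downward. The paper collapses \emph{all} maximal-length sibling groups simultaneously (so the induction is on $N$ alone) and merely asserts the sibling-completeness claim that you prove explicitly, but these are cosmetic differences.
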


\begin{proof}
Let $m:=\max_{I\in\mathcal S} |I|$. We prove the lemma by the reverse induction on $m$.
Take $I\in\mathcal S$ with $|I| = m$, and let $I=i_1\cdots i_m$.
Recall $I_{-} = i_1\cdots i_{m-1}$ and note 
that 
$I_-j\in\mathcal S$ for all $j\in \mathcal I$.
It follows that 
\[
       \sum_{j=1}^k \lambda_{I_-j}^s = \lambda^s_{I_-}.
\]
Set 
\[
    \mathcal S_m :=\mathcal S\cap \mathcal I^m,\,\,\, 
    \mathcal S':=(\mathcal S\setminus \mathcal S_m)\cup\{ I_-\,|\, I\in \mathcal S_m\}.
\]
Since $\mathcal S'$is a simple family, it follows from the inductive hypothesis that 
\[
 \sum_{I\in\mathcal S} \lambda_I^s = \sum_{I\in\mathcal S'} \lambda_I^s =1
\]
\end{proof}

\begin{assertion} \label{ass:simple}
For each $i$, there is a simple family $\mathcal S = \mathcal S_i$
consisting of $J$ satisfying that $\bar W_J$ is contained in 
a ball of radius $c_2|B_i|$ and  contains a ball of radius 
$\tilde\lambda_{min} c_1c_2|B_i|$  for some uniform 
constant $0<\tilde\lambda_{min}\le\lambda_{\min}$.
\end{assertion}

\begin{proof}
For each $J=j_1j_2\cdots \in \mathcal I^{\infty}$,
there is a unique $m$ such that 
\begin{align}
   |W_{j_1\cdots j_{m-1}}|> c_2|B_i|, \,\,\, |W_{j_1\cdots j_{m}}|\le  c_2|B_i|. \label{eq:W}
\end{align}
Set $J_m:=j_1\cdots j_{m}$. Obviously, $W_{J_m}$ is contained in  
a ball of rdius $c_2|B_i|$.
Since $W$ contains a ball of radius $c_1|W|$ and since $W_{J_m}$ is open,
$W_{J_m}$ contains a ball of radius $(1-o(n_0))\lambda_Jc_1|W|$.
From the choice of $J_m$, 
\[
    (1-o(n_0))\lambda_Jc_1|W|\ge (1-o(n_0))^2\lambda_{j_m}c_1c_2|B_i|.
\]
Let $\mathcal S$ be the set of all $J_m\in \mathcal I^*$ when $J$ runs over $\mathcal I^{\infty}$. \eqref{eq:W} implies that $\nu^{m-1}\ge c_2|B_i|/|W|$, and therefore 
 $\mathcal S$ is finite.
This completes the proof.
\end{proof}

Applying Lemma \ref{lem:similar-measure} to the contracting maps 
$g_I:\bar W \to \bar W$, $I\in\mathcal S$, we have 

\begin{assertion} \label{ass:meas}
Let $\mathcal S=\mathcal S_i$ be as in Assertion \ref{ass:simple}. Then 
there is a unique Borel probability measure $\mu=\mu_{\mathcal S}$
in $\mathcal M(\bar W)$ such that
\[
    \mu = \sum_{I\in\mathcal S} \lambda_I^s (g_I)_*(\mu),
\]
where $\lambda_I^s=(\lambda_I)^s$.
\end{assertion}

%\begin{proof}
%Define $F:(\mathcal M(\bar W), d_{\mathcal M}^*) \to (\mathcal M(\bar W), d_{\mathcal M}^*)$ by
%\[
%    F(\sigma) = \sum  \lambda_I^s (g_I)_*(\sigma).
%\]
%It is straightforward to see that  $F$ is contracting. 
%Then the conclusion follows from the contraction mapping theorem.
%\end{proof}
%
Since $\bar W\supset \tilde K$, it follows from Lemma \ref{lem:g-dil} and the property of $\mathcal S$ that
for any $J\in \mathcal S$,
\begin{equation}
   2^s c_2^s |B_i|^s \ge |W_J|^s \ge |\tilde K_J|^s \ge (1-o(n_0))\lambda_J^s |\tilde K|^s. \label{eq:1}
\end{equation}
By Lemma \ref{L3},  the number of $\bar W_J$  with $J\in \mathcal S$ meeting  
$B_i$ is uniformly bounded by some constant $C=C(\delta)$, where 
$\delta=\delta(c_1,c_2,\tilde\lambda_{\min})$.
Let $\mu$ be the measure constructed in Assertion  \ref{ass:meas}.
Then we have 
\begin{equation}
\begin{aligned}
   \mu(B_i)& =  \sum_{I\in\mathcal S} \lambda_I^s (g_I)_*(\mu)(B_i) 
              =  \sum_{I\in\mathcal S} \lambda_I^s  (g_I)_*(\mu)(B_i\cap \bar W_I) \\
             & \le C(\delta)\max_{I\in\mathcal S, \bar W_I\cap B_i\neq \phi} \lambda_I^s. \label{eq:2}
\end{aligned}
\end{equation}
It follows from \eqref{eq:1} and \eqref{eq:2} that
\begin{align}
    c_2^s |B_i|^s \ge (1-o(n_0))C(\delta)^{-1}|K|^s\mu(B_i). \label{eq:3}
\end{align}
Since 
\begin{align}
           \sum_{|J|=m} \lambda^s_J =1, \label{eq:m}
\end{align}
for each $m\ge 1$, applying Lemma \ref{lem:similar-measure} to the contracting maps 
$g_J:\bar W\to \bar W$, $J\in\mathcal I^m$, 
we have a unique measure $\mu_m\in\mathcal M(\bar W)$ such that
\[
     \mu_m = \sum_{|J|=m} \lambda_J^s (g_J)_*(\mu_m).
\]

\begin{assertion}
For $m>\max_{I\in \mathcal S} |I|$, we have $\mu= \mu_m$.
\end{assertion}

\begin{proof}
For  each $J\in \mathcal I^m$, there are unique $I\in \mathcal S$ and
$J_{\alpha}\in\mathcal I^*$ such that $J=IJ_{\alpha}$. Let $A_I$ be the set of all the  
indices $\alpha$ with  $J=IJ_{\alpha}$ for some $J\in\mathcal I^m$
 We can write as 
\[
      \mu_m = \sum_{I\in\mathcal S, \alpha\in A_I} \lambda_{IJ_{\alpha}}^s
 (g_{IJ_{\alpha}})_*(\mu_m).
\]
By iterating $\ell$-times, we have
\begin{align*}
   \mu_m= & \sum_{J_1,\ldots,J_{\ell}\in\mathcal I^m} \lambda_{J_1}^s\cdots
   \lambda_{J_{\ell}}^s (g_{J_{1}}\circ\cdots\circ g_{\ell})_* (\mu_m) \\
          = &\sum_{ I_i\in\mathcal S, \alpha_i\in A_{I_i}}
%  &=\sum_{ I_1,\ldots, I_{\ell},\alpha_1, \ldots, \alpha_{\ell}} 
   \lambda_{I_1J_{\alpha_1}}^s\cdots
   \lambda_{I_{\ell}J_{\alpha_{\ell}}}^s 
    (g_{J_{1}}\circ\cdots\circ g_{J_{\ell}})_{*} (\mu_m).
\end{align*}
Since $A_I = \mathcal I^{m-|I|}$, similarly to \eqref{eq:m} we see  
\begin{align}
         \sum_{\alpha\in A_I} \lambda_{J_{\alpha}}^s = 1. \label{eq:A}
\end{align}
It follows that 
\begin{align*}
   \mu =\sum_{I\in \mathcal S} \lambda_{I}^s (g_I)_*(\mu) 
     =\sum_{ I\in\mathcal S, \alpha\in A_{I}}  \lambda_{IJ_{\alpha}}^s (g_{I})_{*} (\mu).
\end{align*}
By iterating $\ell$-times, we obtain
\begin{align*}
   \mu =\sum_{ I_i\in\mathcal S, \alpha_i\in A_{I_i}} 
  \lambda_{I_1J_{\alpha_1}}^s\cdots
   \lambda_{I_{\ell}J_{\alpha_{\ell}}}^s 
    (g_{I_{1}}\circ\cdots\circ g_{I_{\ell}})_{*} (\mu).
\end{align*}
It follows that
\begin{align*}
d_{\mathcal M}^*(\mu, \mu_m) \le
 & \sum_{ I_i\in\mathcal S, \alpha_i\in A_{I_i}}  
 \lambda_{I_1J_{\alpha_1}}^s\cdots \lambda_{I_{\ell}J_{\alpha_{\ell}}}^s  \\
  & \sup_{L(\phi)\le 1} \left|\int \phi\circ g_{I_{\ell}}\circ\cdots\circ
    g_{I_1}\,d\mu - \int \phi\circ g_{J_{\ell}}\circ\cdots\circ g_{J_1}\,d\mu_m\right|
\end{align*}
Here,
\begin{align*}  
 | \int \phi\circ g_{I_{\ell}} & \circ\cdots\circ
  g_{I_1}\,d\mu -  \int \phi\circ g_{J_{\ell}}\circ\cdots\circ
  g_{J_1}\,d\mu_m | \\
&\le
\left|\int\phi\circ g_{I_{\ell}}\circ\cdots\circ g_{I_1}\,d\mu - 
   \int \phi\circ g_{I_{\ell}}\circ\cdots\circ g_{I_1}\,d\mu_m\right| \\
&+
\left|\int\phi\circ g_{I_{\ell}}\circ\cdots\circ g_{I_1}\,d\mu_m - 
   \int \phi\circ g_{J_{\ell}}\circ\cdots\circ g_{J_1}\,d\mu_m\right|.
\end{align*}
For a constant $\tilde\lambda$ with $\lambda_{\max}<\tilde\lambda<1$,
choose a large $n_0$ such that 
$(1+o(n_0))\lambda_{\max}<\tilde\lambda<1$. 
%for some uniform constant $\tilde\lambda<1$.
%
Then the Lipschitz constant of $g_{I_{\ell}}\circ\cdots\circ g_{I_1}$ satisfies 
\[
 L(g_{I_{\ell}}\circ\cdots\circ g_{I_1} )\le 
              (1+o(n_0))^{\ell}\lambda_{I_\ell}\cdots\lambda_{I_1}
                          <\tilde\lambda_{I_1\cdots I_{\ell}},
\]
where we put $\tilde\lambda_{I_1\cdots I_{\ell}} :=(\tilde\lambda)^{|I_1|+\cdots+|I_{\ell}|}$.
Therefore we obtain
\begin{align*}
 |\int\phi\circ g_{I_{\ell}}&\circ\cdots\circ g_{I_1}\,d\mu - 
   \int \phi\circ g_{I_{\ell}}\circ\cdots\circ g_{I_1}\,d\mu_m|\\
     &\le \tilde\lambda_{I_1\cdots I_{\ell}}  d_{\mathcal M}^*(\mu, \mu_m).
\end{align*}
On the other hand, from the inclusion
\[
 g_{I_{\ell}}\circ\cdots\circ g_{I_1}(\bar W)
   \supset g_{J_{\ell}}\circ\cdots\circ g_{J_1}(\bar W),
\]
we have
\begin{align*}
\sup_{x\in\bar W} |\phi\circ g_{I_{\ell}}&\circ\cdots\circ g_{I_1}(x) -  
   \phi\circ g_{J_{\ell}}\circ\cdots\circ g_{J_1}(x)|  \\
   &\le |g_{I_{\ell}}\circ\cdots\circ g_{I_1}(\bar W)|  \\
   &\le  (1+o(n_0))^{\ell}\lambda_{I_\ell}\cdots\lambda_{I_1}
                          <\tilde\lambda_{I_1\cdots I_{\ell}}
\end{align*}
Thus letting $n=\min_{I\in\mathcal S} |I|$ together with \eqref{eq:A},  we have 
\begin{align*}
 d_{\mathcal M}^*(\mu, \mu_m) &\le
   \sum_{ I_1,\ldots, I_{\ell},\alpha_1, \ldots, \alpha_{\ell}} 
    \lambda_{I_1J_{\alpha_1}}^s\cdots
         \lambda_{I_{\ell}J_{\alpha_{\ell}}}^s
         \tilde\lambda_{I_1\cdots I_{\ell}} ( d_{\mathcal M}^*(\mu, \mu_m)+1)  \\
   &\le \tilde\lambda^{n\ell}
       \sum_{ I_1,\ldots, I_{\ell},\alpha_1, \ldots, \alpha_{\ell}} 
    \lambda_{I_1J_{\alpha_1}}^s\cdots
         \lambda_{I_{\ell}J_{\alpha_{\ell}}}^s ( d_{\mathcal M}^*(\mu, \mu_m)+1)  \\
   &= \tilde\lambda^{n\ell}
     \sum_{ I_1,\ldots, I_{\ell}\in\mathcal S} 
    \lambda_{I_1}^s\cdots \lambda_{I_{\ell}}^s  ( d_{\mathcal M}^*(\mu, \mu_m)+1)\\
   & = \tilde\lambda^{n\ell} ( d_{\mathcal M}^*(\mu, \mu_m)+1),
\end{align*}
which yields
\[
     d_{\mathcal M}^*(\mu, \mu_m) < \frac{1}{1- \tilde\lambda^{n\ell}}  \tilde\lambda^{n\ell}.
\]
Letting $\ell\to\infty$, we conclude that
$\mu= \mu_m$.
\end{proof}
\begin{proof}[Proof of Lemma \ref{lem:from-below}]
From the last assertion, we have
\[
 \supp(\mu) \subset \bigcap_{m=1}^{\infty}\left(\bigcup_{|J|=m} g_J(\bar W)\right)=\tilde K.
\]
It follows from \eqref{eq:3} that
\begin{align*}
   \sum 2^{-s} |B_i|^s & \ge (1-o(n_0))4^{-s}c_2^{-s}C(\delta)^{-1}|\tilde K|\sum \mu(B_i) \\
         & \ge (1-o(n_0))4^{-s}c_2^{-s}C(\delta)^{-1}|\tilde K|.
\end{align*}
This shows that $\dim_H \tilde K \ge s$. We have completed the proof of lemma \ref{lem:from-below}.
\end{proof}
\bigskip

Finally we show that

\begin{lemma} 
$\overline{\dim}_B K \le s$. \label{lem:from-above}
\end{lemma}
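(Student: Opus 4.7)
The plan is, at each small scale $\epsilon>0$, to cover $K$ by finitely many cylinders $\bar V_I$ of diameter comparable to $\epsilon$, indexed by a simple family, and then use Lemma~\ref{lem:simple} to bound the count by $C\epsilon^{-s}$. This is sharper than what Lemma~\ref{lem:simple} applied to the level family $\mathcal I^n$ would give, since in general $|\mathcal I^n|=k^n$ produces only the (weaker) bound $\log k/\log(1/\lambda_{\max})$ when the $\lambda_i$'s are distinct.

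Following the template of Assertion~\ref{ass:simple}, I would define the stopping family
\[
   \mathcal S_\epsilon := \{\, I=i_1\cdots i_m\in\mathcal I^* \,:\, \lambda_{i_1\cdots i_{m-1}}>\epsilon \ge \lambda_I\,\},
\]
with the convention that the empty product $\lambda_\emptyset$ equals $1$. For every infinite sequence $J=j_1j_2\cdots\in\mathcal I^\infty$ the sequence $\lambda_{J_m}$ is strictly decreasing and bounded above by $\lambda_{\max}^m\to 0$, so exactly one prefix of $J$ lies in $\mathcal S_\epsilon$; hence $\mathcal S_\epsilon$ is a simple family in the sense of the paper.

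The next step is to verify $K\subset \bigcup_{I\in\mathcal S_\epsilon}\bar V_I$. Given $x\in K$, consider the descent tree $T_x:=\{I\in\mathcal I^*:x\in\bar V_I\}$. It is closed under prefixes because $\bar V_I\subset\bar V_{I_-}$, and it meets every level $\mathcal I^n$ because $x\in\bar V^n$. Since $T_x$ is an infinite subtree of the $k$-branching tree $\mathcal I^*$, K\"onig's lemma supplies a $J\in\mathcal I^\infty$ all of whose prefixes lie in $T_x$; the unique prefix of $J$ belonging to $\mathcal S_\epsilon$ is then an index $I$ with $x\in\bar V_I$. This K\"onig step is the only mildly delicate point: because distinct cylinders may meet on their boundaries, one cannot pick a canonical nested sequence $\bar V_{I_1}\supset\bar V_{I_2}\supset\cdots$ through an arbitrary $x\in K$ directly.

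Finally I would run the count. Iterating the almost-similarity inequality exactly as in the proof of Lemma~\ref{lem:g-dil} and using Lemma~\ref{lem:converge} gives a universal constant
\[
   C_1:=\prod_{i=0}^\infty (1+\varphi(\nu^i|V|))<\infty
\]
with $|V_I|\le C_1\lambda_I|V|\le C_1|V|\epsilon$ for every $I\in\mathcal S_\epsilon$. The cutoff also forces $\lambda_I\ge\lambda_{\min}\lambda_{I_-}>\lambda_{\min}\epsilon$, so by Lemma~\ref{lem:simple}
\[
   1=\sum_{I\in\mathcal S_\epsilon}\lambda_I^s \ge |\mathcal S_\epsilon|\,(\lambda_{\min}\epsilon)^s,
\]
i.e.\ $|\mathcal S_\epsilon|\le \lambda_{\min}^{-s}\epsilon^{-s}$. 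Therefore $N_{C_1|V|\epsilon}(K)\le \lambda_{\min}^{-s}\epsilon^{-s}$, and after the routine rescaling $\epsilon'=C_1|V|\epsilon$ and taking $\limsup_{\epsilon'\to 0}\log N_{\epsilon'}(K)/(-\log\epsilon')$ we conclude $\overline{\dim}_B K\le s$.
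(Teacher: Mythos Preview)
Your proof is correct and follows essentially the same approach as the paper's: both construct a simple stopping family at scale $\epsilon$ and invoke Lemma~\ref{lem:simple} to bound its cardinality by a constant times $\epsilon^{-s}$; the paper stops on diameters $|W_J|\le\epsilon$ (after first restricting to $W=V_{I_0}$) while you stop on $\lambda_I\le\epsilon$ directly on $V$, but these are equivalent given the dilation control. One simplification: your K\"onig-lemma step is unnecessary, since once $\mathcal S_\epsilon$ is known to be a simple family with $n:=\max_{I\in\mathcal S_\epsilon}|I|$, every $I'\in\mathcal I^n$ has a prefix in $\mathcal S_\epsilon$, whence $K\subset\bar V^n\subset\bigcup_{I\in\mathcal S_\epsilon}\bar V_I$ immediately.
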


\begin{proof}
For every $\epsilon>0$ and $J_{\infty}=j_1j_2\cdots\in\mathcal I^{\infty}$, take a minimal
$m$ satisfying $|W_{J}|\le\epsilon$ for $J:=J_m=j_1\cdots j_m$.
Note that 
\begin{align}
 |W_{J}| \ge \lambda_{\min}/2|W_{J_{-}}| \ge  \epsilon\lambda_{\min}/2.  \label{eq:below}
\end{align}
Thus we have a simple family $\mathcal S=\{ J\,|\,J_{\infty}\in\mathcal I^{\infty}\,\}$.
By Lemma \ref{lem:simple}, we have  
\begin{align}
     \sum_{J\in\mathcal S}\,\lambda_{J}^s =1. \label{eq:sumJ'}
\end{align}
By Lemma \ref{lem:g-dil}, we have 
\begin{align}
    \left| \frac{|W_{J}|}{|W|}  -\lambda_{J} \right| < \lambda_{J}o(n_0). \label{eq:W}
\end{align}
It follows from \eqref{eq:below} and  \eqref{eq:W} that
\[
     ( \epsilon\lambda_{\min}/2)^s \le 2^s\lambda_{J}^s|W|^s.
\]
Using \eqref{eq:sumJ'}, we obtain
\[
      \sum_{J\in\mathcal S} (\epsilon\lambda_{\min}/2)^s\le 2^s|W|^s.
\]
Since $\{ W_J\,|\, J\in \mathcal S\}$ is disjoint, we conclude that 
\[
         N_{\epsilon}(\tilde K)\le  2^s|W|^s (\epsilon\lambda_{\min}/2)^{-s}.
\]
This shows that $\overline{\dim}_B\,\tilde K\le s$, and the conclusion of the lemma follows.
\end{proof}

It follows from Lemmas \ref{lem:from-below}, \ref{lem:from-above}  and \eqref{eq:dims} that 
$\dim_H\,K =\dim_B\,K=s$.
This completes the proof of Theorem \ref{T1}.
\par
\bigskip
Finally we point out that our notion of  asymptotic similarity 
system  provides a controlled Moran construction defined in
Rajala and Vilppolainen \cite{RV}:

\begin{lemma} \label{lem:remarkRV}
Let $\{ (\bar V_I, f_I) \}_{I\in\mathcal I^*}$ be a $(\{ \lambda_i\}_{i=1}^k, \varphi,\nu)$-asymptotic 
similarity system. Then  $\{ \bar V_I \}_{I\in\mathcal I^*}$ is a controlled Moran construction 
defined in  Rajala and Vilppolainen (\cite{RV}).
Namely, there exists a constant $D\ge 1$ such that for every $I,J \in \mathcal I^*$
\begin{enumerate}
 \item $\displaystyle{\bar V_I\subset \bar V_{I^{-}}}$;
 \item there exists a positive integer $n$ such that
  \[
                      \max_{I\in \mathcal I^n}\, |\bar V_I| <D^{-1};
 \]
 \item   $\displaystyle{D^{-1} \le \frac{|\bar V_{IJ}|} {|\bar V_{I}||\bar V_{J}|} \le D}$.
\end{enumerate}
\end{lemma}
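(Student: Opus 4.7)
The plan is to verify the three conditions of the controlled Moran construction in turn. Condition (1) is immediate from the definition of the asymptotic similarity system, since each $f_I$ is by construction a homeomorphism $\bar V_{I^{-}} \to \bar V_I \subset \bar V_{I^{-}}$. Condition (2) follows at once from the estimate $|V_I| \le \nu^{|I|}|V|$ (equation \eqref{eq:nu}): for any chosen constant $D\ge 1$, simply pick $n$ so large that $\nu^n|V| < D^{-1}$.

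The main step is to establish, for condition (3), a uniform two-sided diameter bound of the form
\begin{equation*}
  \lambda_I\, P_-\, |V| \;\le\; |\bar V_I| \;\le\; \lambda_I\, P_+\, |V| \qquad (I\in\mathcal I^*),
\end{equation*}
where $\lambda_I := \lambda_{i_1}\cdots\lambda_{i_n}$ for $I=i_1\cdots i_n$, and
$P_\pm := \prod_{\ell=0}^{\infty}(1\pm\varphi(\nu^{\ell}|V|))$
are the finite and strictly positive constants guaranteed by Lemma \ref{lem:converge}. To derive this, I apply the $(\lambda_{i_\ell},\varphi(|\bar V_{i_1\cdots i_{\ell-1}}|),\nu)$-almost similarity property of $f_{i_1\cdots i_\ell}$, taking sup and inf over pairs of points to pass from the pointwise bound to a diameter bound
\begin{equation*}
   \lambda_{i_\ell}\bigl(1-\varphi(|V_{i_1\cdots i_{\ell-1}}|)\bigr)|V_{i_1\cdots i_{\ell-1}}|
   \;\le\; |V_{i_1\cdots i_\ell}|
   \;\le\; \lambda_{i_\ell}\bigl(1+\varphi(|V_{i_1\cdots i_{\ell-1}}|)\bigr)|V_{i_1\cdots i_{\ell-1}}|,
\end{equation*}
and then iterate from $\ell=1$ through $\ell=|I|$. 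Using monotonicity of $\varphi$ together with $|V_{i_1\cdots i_{\ell-1}}|\le \nu^{\ell-1}|V|$, each perturbation factor is dominated by the uniform one $(1\pm\varphi(\nu^{\ell-1}|V|))$, so the infinite-product bounds $P_\pm$ control the finite product along the chain.

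Exactly the same iterative argument, applied along the chain from $\bar V$ down through $\bar V_I$ to $\bar V_{IJ}$ (which has length $|I|+|J|$ and passes at step $|I|$ through $\bar V_I$), yields $\lambda_{IJ}\,P_-\,|V|\le|\bar V_{IJ}|\le \lambda_{IJ}\,P_+\,|V|$ with $\lambda_{IJ}=\lambda_I\lambda_J$. Dividing these estimates against the corresponding bounds for $|\bar V_I|$ and $|\bar V_J|$ gives
\begin{equation*}
   \frac{P_-}{P_+^2\,|V|}
   \;\le\; \frac{|\bar V_{IJ}|}{|\bar V_I|\,|\bar V_J|}
   \;\le\; \frac{P_+}{P_-^2\,|V|},
\end{equation*}
so the choice $D := \max\{P_+/(P_-^2|V|),\, P_+^2|V|/P_-,\, 1\}$ verifies condition (3), after which (2) is ensured by taking $n$ with $\nu^n|V|<D^{-1}$.

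The only mildly delicate point is the uniform control of the $(1\pm\varphi(\nu^{\ell}|V|))$ factors along chains of unbounded length; this, however, is precisely what Lemma \ref{lem:converge} (combined with $|V_{i_1\cdots i_{\ell-1}}|\le\nu^{\ell-1}|V|$) delivers, so the argument encounters no serious obstacle.
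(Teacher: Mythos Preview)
Your proof is correct and follows essentially the same approach as the paper: both arguments iterate the almost-similarity bounds to obtain a uniform two-sided estimate $|\bar V_I|\asymp \lambda_I|V|$ and then divide. The only difference is organizational: the paper routes this through Lemma~\ref{lem:g-dil} (passing to a deep subsystem $W=V_{I_0}$ with $o(n_0)<1/2$ and then invoking finiteness of the initial segment), whereas you iterate directly from level~$0$ and absorb all perturbation factors into the infinite products $P_\pm$ of Lemma~\ref{lem:converge}, which is slightly more direct.
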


\begin{proof}
(1) is clear. In view of \eqref{eq:nu}, (2) is obvious.
To show (3), we go back to the situation of Lemma \ref{lem:g-dil}.
Let $o(n_0)$ be as in \eqref{eq:o}. 
For a large $n_0$, fix an abitrary $I_0=i_1\cdots i_{n_0}\in\mathcal I^{n_0}$,
and consider $W=V_{I_0}$.
If we take $n_0$ with $o(n_0)<1/2$, we have from Lemma \ref{lem:g-dil},
\[
        \frac{1}{2}\lambda_I |\bar W| <|\bar W_I| <2\lambda_I \bar W|,\quad
             \frac{1}{2}\lambda_J |\bar W| <|\bar W_J| <2\lambda_J |\bar W|,
\]
which imply 
\[
    \frac{1}{4|\bar W|} |\bar W_I||\bar W_J| < |\bar W_{IJ}| < \frac{4}{|\bar W|} |\bar W_I||\bar W_J|.
\]
Now (3) is immediate, since we have only finitely many choices for $I_0$.
\end{proof}

%%%%%%%%%%%%%%%%%%%%%%%%%%%%%%%%%%%%%%%%%%%%%%%%%%%%%%%%%%%%%%%%%%%%%%%%%%%%%%%%%%%%%%%%%%%%%%%%%%
\section{Sierpinski gaskets on surfaces} \label{sec:Sierp}

Let $D$ be a domain in a complete surface $M$. We assume that $D$ is convex in the 
sense that for every two points of $D$ there exits a unique minimal geodesic joining them
and it is contained in $D$.
%for every $p\in D$, the distance function $d_p(\cdot)=d(p, \cdot)$ from $p$
%is convex in $D$.   
For simplicity, we assume that
the absolute value of the Gaussian curvature of $M$ is at most $1$ on $D$.
Let $\Delta$ be a domain in $D$ bounded by a geodesic triangle $(\gamma_1,\gamma_2,\gamma_3)$.
We call $\Delta$ a {\it geodesic triangle region}. The set of lengths $\{ L(\gamma_i)\}_{i=1}^3$ 
is called the side-length of $\Delta$.

\begin{definition} \upshape
We say that $\Delta$ is 
{\it $\delta$-non-degenerate}
if each angle $\tilde \alpha$ of a comparison triangle $\tilde \Delta$ of $\Delta$ in $\mathbb R^2$ satisfies
$\delta < \tilde\alpha < \pi - \delta$, where a comparison triangle means that $\tilde \Delta$ has 
the same side-length as $\Delta$. 
\end{definition}

In this section, we let $\mathcal I=\{  1,2,3\}$.
 Let  $\{ \Delta_I\}_{I\in\mathcal I^*}$ be the system of geodesic triangles obtained by dividing $\Delta$
into smaller triangles $\Delta_I$ consecutively, as stated in Introduction.

\begin{definition} \upshape
We say that the system $\{ \Delta_I\}_{I\in\mathcal I^*}$ is 
{\it non-degenerate} if there is a $\delta>0$ such that
$\Delta_I$ is $\delta$-non-degenerate for every $I\in\mathcal I^*$.
In this case, we also say that $\Delta$ is {\it asymptotically 
non-degenerate}.
\end{definition}

\begin{example} \label{ex:sphere}  \upshape
Let $\mathbb S^2$ denote the unit sphere around the origin in $\mathbb R^3$, 
and let $\Delta$ be a geodesic triangle domain on $\mathbb S^2$
of perimeter less than $2\pi$. Joining the vertexes $p_1,p_2,p_3$ of $\Delta$
by shortest segments in $\mathbb R^3$, we have a geodesic triangle region
$\hat \Delta$ on the plane through $p_1,p_2,p_3$. 
By the projection along the rays from the origin of $\mathbb R^3$, 
we have a canonical map 
\[
         \pi:\Delta\to\hat\Delta,
\]
which is a bi-Lipschitz homeomorphism. 
From a system of geodesic triangles 
$\{ \Delta_I\}_{I\in\mathcal I^*}$ of $\Delta$, setting $\hat\Delta_I:=\pi(\Delta_I)$,
we have the  system of geodesic triangles  $\{ \hat \Delta_I\}_{I\in\mathcal I^*}$ of 
$\tilde\Delta$. Note that each $ \hat\Delta_I$ is $2^{-|I|}$-similar to $\hat \Delta$
in the usual sense. 
Since $\Delta_I$ is bi-Lipschitz homeomorphic to 
$\hat \Delta_I$, 
\[
         {\rm Area}(\Delta_I)\ge L^{-2}{\rm Area}(\hat\Delta_I),
\]
where $L$ is the bi-Lipschitz constant of $\pi$.
It follows that $\Delta$ is asymptotically non-degenerate. 
Now we have the formula \eqref{eq:dimHB} for the 
Sierpinsli gasket $K_{\Delta}$ associated with $\Delta$ by  two reasons.
One is by Theorem \ref{thm:Sierp}
and the other one is due to the 
well-known formula for  $K_{\hat \Delta}$.
\end{example}

Example \ref{ex:sphere} is the special case. For a geodesic triangle region on 
a general complete surface, 
it  seems impossible to reduce the problem to a triangle region in $\mathbb R^2$.

The main purpose of this section is to prove the following result.

\begin{theorem} \label{thm:Sierp-small}
For every $\delta>0$ there exists an $r>0$ such that
\begin{enumerate}
 \item every geodesic triangle region $\Delta$ on $D$ with $|\Delta|\le r$ is asymptotically non-degenerate;
 \item the Hausdorff and box dimensions of the Sierpinski gasket $K_{\Delta}$ associated with $\Delta$ 
         are  given by \eqref{eq:dimHB}.
\end{enumerate}
\end{theorem}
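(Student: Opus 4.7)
The plan is to derive (2) from (1) by direct appeal to Theorem \ref{thm:Sierp}, which already gives the dimension formula \eqref{eq:dimHB} for any asymptotically non-degenerate triangle. The substantive content is therefore (1): showing that a sufficiently small $\delta$-non-degenerate geodesic triangle $\Delta$ generates a system $\{\Delta_I\}_{I\in\mathcal I^*}$ of subtriangles whose shapes stay uniformly bounded away from degeneracy. The geometric principle behind this is that on a surface with $|K|\le 1$ the midpoint subdivision of a small geodesic triangle is ``almost Euclidean'': each corner child triangle has two sides of length exactly half of the corresponding parent sides (by definition of the geodesic midpoint) and a third side that is approximately half the remaining parent side, up to a curvature-controlled correction.

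The first step is a comparison estimate. Using Rauch comparison (equivalently, the expansion $g_{ij}=\delta_{ij}+O(\kappa|x|^2)$ of the metric in normal coordinates on $D$), I would establish that for any geodesic triangle $\Delta'\subset D$ of diameter $d$ with side lengths $a,b,c$, the geodesic midpoints $m_a,m_b$ of the sides of length $a,b$ satisfy
\[
\bigl|d_M(m_a,m_b)-c/2\bigr|\le C\,d^3,
\]
where $C$ depends only on the curvature bound on $D$. This yields a relative shape distortion of order $Cd^2$ per subdivision: the normalized side-length triple of each of the three corner children differs from that of the parent by $O(d^2)$.

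The second step is iteration over scales. If $\Delta_I$ has diameter $d_n$ at level $n=|I|$, then $d_n\le r\cdot 2^{-n}\prod_{k=0}^{n-1}(1+Cd_k^2)$, and the summability
\[
\prod_{k=0}^{\infty}(1+Cd_k^2)\le \exp\Bigl(Cr^2\sum_{k=0}^{\infty}4^{-k}\Bigr)=\exp(4Cr^2/3)
\]
keeps this product bounded and close to $1$ when $r$ is small. Hence the normalized side-length triples of all $\Delta_I$ remain inside an arbitrarily small neighborhood of the initial shape of $\Delta$, and since the map from side-length ratios to comparison-triangle angles is continuous on the open locus of genuine triangles, the comparison angles of every $\Delta_I$ stay in $(\delta',\pi-\delta')$ for some uniform $\delta'>0$. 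This establishes asymptotic non-degeneracy, and part (2) then follows at once from Theorem \ref{thm:Sierp}.

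The main obstacle is the sharp exponent in the comparison estimate. The cubic scaling $O(d^3)$ — equivalently a relative $O(d^2)$ — is essential for the resulting geometric series to converge; any weaker $O(d^{2-\varepsilon})$ bound on the midsegment error would let the shapes drift without control under infinite iteration. This cubic order is the expected one both from the Gauss--Bonnet defect $\int_\Delta K\,dA=O(d^2)$ and from the second-order Taylor expansion of the metric, but the estimate must be uniform in the position of $\Delta'$ inside the convex domain $D$ (not just near a single basepoint) and must apply to triangles lying arbitrarily deep in the subdivision tree, so some form of Rauch comparison valid globally on $D$ — rather than a single-basepoint coordinate expansion — is the technical crux.
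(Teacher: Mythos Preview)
Your treatment of part~(1) is essentially the paper's own argument: the Rauch-comparison midsegment estimate (their Lemma~\ref{lem:Rauch}) gives exactly the $O(d^3)$ absolute error you describe, the iteration over levels yields a convergent product bounding all side-length ratios (their Lemma~\ref{lem:quotient}), and uniform non-degeneracy then follows via the elementary stability Lemma~\ref{lem:nondeg}. So for~(1) your outline is correct and matches the paper.

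The gap is in part~(2). You propose to deduce (2) from (1) by invoking Theorem~\ref{thm:Sierp}, but in the paper's logical structure Theorem~\ref{thm:Sierp} is \emph{derived from} Theorem~\ref{thm:Sierp-small}, not the other way around (see the paragraph immediately following the statement of Theorem~\ref{thm:Sierp-small}). So that appeal is circular. What the paper actually does for~(2) is prove directly that $\{(\Delta_I,f_I)\}_{I\in\mathcal I^*}$ is a $(1/2,\varphi_c,\nu)$-asymptotic similarity system (Theorem~\ref{thm:alm-similar}) and then invoke the abstract Theorem~\ref{T1}. This step needs substantially more than your edge-length control: one must construct explicit maps $f_I:\Delta_{I'}\to\Delta_I$ (via a ruled parametrization $\varphi(t,s)$ by geodesics from a vertex) and verify the pointwise bound $\bigl||df(v)|/|v|-\tfrac12\bigr|<C(\delta)r^2$ for \emph{all} tangent vectors $v$, not just along the sides. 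That verification is the bulk of Section~\ref{sec:Sierp} and rests on a Jacobi-field analysis (Lemmas~\ref{lem:angle1}, \ref{lem:angle2}, \ref{lem:normal-jacobi}) controlling both normal and tangential components of the transverse variation field $J_s$, together with a definite angle bound~\eqref{eq:angle4} between $J_s$ and $T_s$. Your midsegment estimate alone does not supply this, so the passage from asymptotic non-degeneracy to the dimension formula is where your outline is genuinely incomplete.
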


If $\Delta$ be asymptotically non-degenerate as in Theorem \ref{thm:Sierp},
we can apply Theorem \ref{thm:Sierp-small} to $\Delta_I$ for  each $I\in\mathcal I^*$
with large enough $|I|$.
Therefore Theorem \ref{thm:Sierp-small} yields Theorem \ref{thm:Sierp}.

 The following lemma is a consequence of law of cosine, and hence is omitted.

\begin{lemma}\label{lem:nondeg}
For any $\delta>0$ there exists an $\epsilon>0$ such that 
if a geodesic triangle $\Delta$ of side length $(a_1, a_2, a_3)$ is $\delta$-non-degenerate,
and if the  side length $(a_1', a_2', a_3')$ of a geodesic triangle $\Delta'$ satisfies
\begin{equation}
        (1-\epsilon)  \frac{a_j}{a_i} <  \frac{a'_j}{a'_i}  <   (1+\epsilon)  \frac{a_j}{a_i}, \label{eq:edge-assum}
\end{equation}
for any $i\neq j$, then $\Delta'$ is $\delta/2$-non-degenerate.
\end{lemma}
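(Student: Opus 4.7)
The plan is to reduce the lemma to a uniform continuity argument in the Euclidean comparison triangles, since by definition the $\delta$-non-degeneracy condition is a statement about side-length ratios only. Fix $\delta>0$. For a Euclidean triangle with side lengths $(a_1,a_2,a_3)$ let $\tilde\alpha_i$ denote the angle opposite $a_i$. The law of cosines
\[
  \cos\tilde\alpha_i = \frac{(a_j/a_i)^2+(a_k/a_i)^2-1}{2(a_j/a_i)(a_k/a_i)},
\]
with $\{i,j,k\}=\{1,2,3\}$, shows that $\tilde\alpha_i$ is a continuous function $F_i(r_j,r_k)$ of the two ratios $r_j=a_j/a_i$ and $r_k=a_k/a_i$ alone.

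Next I would use $\delta$-non-degeneracy of $\Delta$ to confine these ratios to a compact subregion of $(0,\infty)^2$. By the Euclidean law of sines, $a_j/a_i=\sin\tilde\alpha_j/\sin\tilde\alpha_i$, and the hypothesis $\tilde\alpha_\ell\in(\delta,\pi-\delta)$ gives $\sin\tilde\alpha_\ell\in[\sin\delta,1]$, whence $a_j/a_i\in[\sin\delta,\,1/\sin\delta]=:[c_1,c_2]$. Thus the ratios associated to $\Delta$ lie in the compact set $Q_\delta:=[c_1,c_2]^2$, and if \eqref{eq:edge-assum} holds with $\epsilon<1/2$, the corresponding ratios of $\Delta'$ lie in the slightly enlarged compact set $Q_\delta':=[c_1/2,\,2c_2]^2\subset(0,\infty)^2$.

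On $Q_\delta'$ each function $F_i$ is uniformly continuous, with image in a compact subinterval of $(0,\pi)$ on which $\arccos$ has bounded derivative. Hence, given the target tolerance $\delta/2$, there exists $\epsilon=\epsilon(\delta)>0$ such that whenever $|r_j'/r_j-1|<\epsilon$ and $|r_k'/r_k-1|<\epsilon$ for the ratios of $\Delta$ and $\Delta'$, one obtains $|F_i(r_j',r_k')-F_i(r_j,r_k)|<\delta/2$. Applied to each $i=1,2,3$, this gives $\tilde\alpha_i'\in(\delta/2,\pi-\delta/2)$, and therefore $\Delta'$ is $\delta/2$-non-degenerate.

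The hard part, if there is one, is mere bookkeeping: I must check that hypothesis \eqref{eq:edge-assum} provides the required multiplicative control on both arguments of each $F_i$ simultaneously. This is clear because every ratio $a_j/a_i$ with $i\neq j$ occurs in \eqref{eq:edge-assum}. Beyond that, as the authors note, the statement reduces to the Euclidean law of cosines plus a compactness argument.
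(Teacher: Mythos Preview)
Your proof is correct and follows the same opening moves as the paper --- reduce to the Euclidean comparison triangle and use the law of sines to confine the side ratios to a compact interval (the paper isolates this as a Sublemma with the identical constant $1/\sin\delta$). From that point the two arguments diverge. The paper perturbs one side at a time, invokes explicit half-angle trigonometric identities, and derives quantitative bounds of the form $|\alpha'-\alpha|\le C(\delta)\,\epsilon$ with concrete constants; you instead package the law of cosines as a continuous map $F_i$ of the ratio pair and appeal to uniform continuity on a compact set. Your route is shorter and conceptually cleaner for the bare existence statement; the paper's route yields an explicit $\epsilon(\delta)$, which is closer in spirit to the quantitative comparison estimates used later in Section~4 (Lemmas on angle differences of order $r^2$), though strictly speaking the explicit form is not needed for this lemma as stated.

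One small technical point to tidy up: your enlarged box $Q_\delta'=[c_1/2,2c_2]^2$ is not contained in the domain of $F_i$, since for small $\delta$ the corner $(c_1/2,c_1/2)$ violates $r_j+r_k\ge 1$ and $\arccos$ is undefined there. This does not affect the argument --- $\Delta'$ is assumed to be a genuine triangle, so its ratios satisfy the triangle inequality automatically, and hence $(r_j',r_k')$ lies in the intersection of $Q_\delta'$ with the open triangle-inequality region, which is still relatively compact in the domain of $F_i$. Phrasing the compactness step this way closes the gap.
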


\begin{proof}
We may assume that  $\Delta$ and $\Delta'$ are triangles in $\mathbb
 R^2$. Set $(a,b,c):=(a_1, a_2, a_3)$ and $(a',b',c'):=(a_1', a_2', a_3')$ for simplicity.
Rescaling $\Delta'$, we may assume that $c=c'$.
It suffices to show that  if $\Delta'$ has  side-length
 $(a',b',c')=(a',b,c)$ satisfying \eqref{eq:edge-assum}, then  
the angles $\alpha$, $\beta$ (resp. $\alpha'$, $\beta'$) 
opposite to the edges of length $a$ and $b$ in $\Delta$ (resp  $a'$ and $b$ in $\Delta'$)
satisfy  that $|\alpha'-\alpha|<\delta/4$ and $|\beta'-\beta|<\delta/4$
for a suitable $\epsilon =\epsilon(\delta)>0$.

\begin{sublemma} \label{slem:edge-quot}
If a geodesic triangle $\Delta$ of side lengths $(a_1, a_2, a_3)$ is $\delta$-non-degenerate,
then there exists a constant $C(\delta)$ such that 
\[
         C(\delta)^{-1} < \frac{a_j}{a_i} < C(\delta),
\]
for every $1\le i,j \le 3$.
\end{sublemma}

\begin{proof}
This is an immediate consequence of the law of sines. One can take $C(\delta)=1/\sin\delta$.
\end{proof}

By trigonometry, we have
\begin{equation*}
  \sin^2\alpha/2 = (a+c)(a+b)/bc,\,\, \sin^2\alpha'/2 = (a'+c)(a'+b)/bc.
\end{equation*}
It follows from the assumption and Sublemma \ref{slem:edge-quot} with $|a'-a|<\epsilon a$ that
\begin{equation}
    |\sin^2\alpha'/2 -  \sin^2\alpha/2|  \le a(a+a'b+c)\epsilon/bc \le 5C(\delta)^2\epsilon.
\end{equation}
Since $\sin \alpha'/2 +\sin \alpha/2 > \sin (\delta/2)$, we obtain
\[
   |\sin\alpha'/2 -  \sin\alpha/2|\le  5C(\delta)^2\epsilon/\sin (\delta/2).
\]
From $\alpha<\pi-2\delta$, we have $\cos\frac{\alpha'+\alpha}{4} > \sin (\delta/4)$.
It follows that 
\begin{equation}
 |\alpha'-\alpha| \le 8
  \left| \sin\frac{\alpha'-\alpha}{4} \right|  < 5C(\delta)^2\epsilon/\sin^2(\delta/4).\label{eq:alpha}
\end{equation}
Similarly we have
\begin{align*}
    |\sin^2\beta'/2 -  \sin^2\beta/2|  &= |a-a'|b(b+c)/aa'c \le b(b+c)\epsilon/ca' \\
                         & \le \frac{\epsilon}{1-\epsilon} \frac{b(b+c)}{a} \le  \frac{\epsilon}{1-\epsilon} 2 C(\delta)^2,
\end{align*}
which implies 
\begin{equation}
     |\beta' - \beta| <\frac{8\epsilon}{1-\epsilon} \left( \frac{C(\delta)}{\sin (\delta/2)}\right)^2.
                                           \label{eq:beta}
\end{equation}
Thus from \eqref{eq:alpha}, \eqref{eq:beta},  we obtain
$|\alpha'-\alpha|<\delta/4$ and $|\beta'-\beta|<\delta/4$ for a suitable $\epsilon\le\epsilon(\delta)$.
This completes the proof.
\end{proof}

Let $\Delta$ be  a geodesic triangle region on $D$ bounded by a geodesic triangle
$(\gamma_1,\gamma_2,\gamma_3)$ with vertices $p_1,p_2,p_3$.
By the convexity of $D$, we have  
\[
        |\Delta|=\max _{1\le i\le 3} a_i,
\]
where we put $a_i:=L(\gamma_i)$.
Fix a vertex $p_1$ and let $\gamma_i$ be parametrized on $[0,1]$  in such a way that  
$\gamma_2(0)=\gamma_3(0)=p_1$. 
Let $\varphi:[0,1]\times [0,1]\to \Delta$ be a parametrization of $\Delta$ such 
that $t\to \varphi(t,s)$, $0\le t\le 1$, is the geodesic, denoted by $\sigma_s$,
from $\gamma_2(s)$ to $\gamma_3(s)$ for each $s\in [0,1]$.
Namely $\varphi(t,s)=\sigma_s(t)$.  We set 
\[
                    a_1(s) := L(\sigma_s).
\]
Now define the map $f_1:\Delta\to \Delta$ by 
\[
     f_1(\varphi(t,s))=\varphi(t,s/2).
\]
Note that the image $\Delta_1$ of $f_1$ is the geodesic triangle region  bounded by 
$(\gamma_2|_{[0,1/2]}, \gamma_3|_{[0,1/2]}, \sigma_{1/2})$ and that  $\Delta_1$ has side-length
$( a_1(1/2), a_2/2, a_3/2)$. We put
\[
          r:= |\Delta|.
\]

\begin{lemma}\label{lem:Rauch} 
For any $s\in (0,1)$,  we have
\[
       1- r^2 <\frac{a_1(s)}{sa_1} < 1+r^2.
\]
In particular, $|\Delta_1|\le \frac{1}{2}(1+r^2)|\Delta|$.
\end{lemma}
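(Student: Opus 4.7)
The strategy is to transfer the problem to the tangent space $T_{p_1}M$ via the exponential map at the vertex $p_1$ and exploit Jacobi field comparison (Rauch's theorem) to quantify the deviation from Euclidean geometry in terms of the curvature bound $|K|\le 1$. Since $\Delta$ lies in the convex domain $D$, the map $\exp_{p_1}$ is a diffeomorphism from a neighborhood of the origin in $T_{p_1}M$ onto a neighborhood of $p_1$ containing $\Delta$, so I can pick $v_2,v_3\in T_{p_1}M$ with $|v_i|=a_i$ such that $\gamma_i(t)=\exp_{p_1}(tv_i)$ for $i=2,3$ and $t\in[0,1]$. Then $\gamma_i(s)=\exp_{p_1}(sv_i)$, and
\[
   a_1(s) = d\bigl(\exp_{p_1}(sv_2),\,\exp_{p_1}(sv_3)\bigr).
\]

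The key analytic input is the following consequence of Rauch's theorem applied with both sign curvature bounds $-1\le K\le 1$: for any $v,w\in T_{p_1}M$ with $|v|,|w|\le \rho$,
\[
   \bigl|\,d(\exp_{p_1}(v),\exp_{p_1}(w)) - |v-w|\,\bigr| \le C\rho^{2}\,|v-w|,
\]
where $C$ is a universal constant. Heuristically, one writes the distance as an integral along the geodesic variation $u\mapsto \exp_{p_1}((1-u)v+uw)$ and uses the two-sided Jacobi-field bound $\sin u\le |J(u)|\le \sinh u$ (valid since sectional curvature lies in $[-1,1]$); Taylor expanding $\sin$ and $\sinh$ around $0$ then yields a multiplicative error of order $1+O(\rho^{2})$.

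Applying this estimate once with $(v,w)=(sv_2,sv_3)$ (with $\rho\le sr\le r$) and once with $(v,w)=(v_2,v_3)$ (with $\rho\le r$), I would obtain
\[
   a_1(s) = s|v_2-v_3|\bigl(1+O(r^{2})\bigr),\qquad a_1 = |v_2-v_3|\bigl(1+O(r^{2})\bigr),
\]
from which the desired bound $a_1(s)/(sa_1) = 1+O(r^{2})$ follows by a single division. The final ``in particular'' assertion is then immediate: from the convexity relation $|\Delta_1|=\max\{a_1(1/2),\,a_2/2,\,a_3/2\}$ and $a_1(1/2)\le \tfrac12(1+r^2)a_1\le \tfrac12(1+r^2)r$, the term $a_1(1/2)$ dominates $a_2/2,a_3/2\le r/2$, giving $|\Delta_1|\le \tfrac12(1+r^2)|\Delta|$.

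\emph{Main obstacle.} The delicate point is producing the multiplicative error with the exact shape $1\pm r^2$ stated, rather than the weaker $1\pm Cr^2$ that a textbook Rauch estimate would give; this requires careful bookkeeping of the Taylor coefficients of $\sin$ and $\sinh$ under the normalization $|K|\le 1$. Even if one settles for $1\pm Cr^2$ with a universal $C$, this has no effect downstream: Theorem \ref{thm:Sierp} only demands $\varphi(x)=cx^2$ for \emph{some} $c>0$, so the constant can be absorbed without harming the proof of asymptotic non-degeneracy or the Hausdorff dimension formula.
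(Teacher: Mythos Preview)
Your approach is essentially the paper's: pull back to $T_{p_1}M$ via $\exp_{p_1}^{-1}$, note that the preimages $\tilde\gamma_i(s)=sv_i$ scale linearly in $s$, and use Rauch comparison (with $|K|\le 1$) to relate the Riemannian distance $a_1(s)=d(\gamma_2(s),\gamma_3(s))$ to the Euclidean distance $|sv_2-sv_3|$ in the tangent space. The only difference is cosmetic: the paper quotes the Rauch bound in the explicit form
\[
   \frac{\sin r}{r}<\frac{a_1(s)}{|sv_2-sv_3|}<\frac{\sinh r}{r},
\]
so that after dividing the $s$-inequality by the $s=1$-inequality one gets $\frac{\sin r}{\sinh r}<\frac{a_1(s)}{sa_1}<\frac{\sinh r}{\sin r}$, and the elementary inequalities $\frac{\sinh r}{\sin r}<1+r^2$ and $\frac{\sin r}{\sinh r}>1-r^2$ (valid for $0<r<1$) deliver the stated constant. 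Your ``careful bookkeeping'' concern is thus resolved simply by keeping the model-space functions $\sin$ and $\sinh$ intact until the last step, rather than Taylor-expanding early. And as you say, even a bound $1\pm Cr^2$ would suffice for everything downstream. One small slip in your ``in particular'' argument: $a_1(1/2)$ need not dominate $a_2/2$ or $a_3/2$; rather, all three quantities are individually $\le \tfrac12(1+r^2)r$, which is what gives $|\Delta_1|\le\tfrac12(1+r^2)|\Delta|$.
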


\begin{proof}
Let $\tilde\gamma_i(s):=\exp_{p_1}^{-1}(\gamma_i(s))$, $i=2,3$.
The Rauch comparison theorem (see \cite{CHE}) implies 
\begin{align}
    \frac{\sin r}{r} &<\frac{a_1}{d(\tilde\gamma_2(1),
 \tilde\gamma_3(1))} <\frac{\sinh r}{r}            \\
   \frac{\sin r}{r} &<\frac{a_1(s))}{d(\tilde\gamma_2(s),
 \tilde\gamma_3(s))} <\frac{\sinh r}{r}.  
\end{align}
Since $d(\tilde\gamma_2(s), \tilde\gamma_3(s))=sd(\tilde\gamma_2(1),
 \tilde\gamma_3(1)$,  the conclusion follows.
\end{proof}

Let us denote by $(a_{1,1}, a_{1,2}, a_{1,3})$ the side length $(a_1(1/2), a_2/2, a_3/2)$
of  $\Delta_1$. Lemma \ref{lem:Rauch} implies that
\begin{equation}
               (1-r^2)\frac{a_i}{a_j} <\frac{a_{1,i}}{a_{1,j}} <(1+r^2)\frac{a_i}{a_j}, \label{eq:quotient1}
\end{equation}
for every $1\le i, j\le 3$. 

In a similar  way, we construct a map $f_{i_1}:\Delta\to \Delta_{i_1}\subset \Delta$ for 
each $1\le i_1\le 3$.
Repeating this procedure for each $\Delta_i$ inductively, for each multi-index $I=i_1\cdots i_{n-1}i_n$,
we have a geodesic triangle region $\Delta_I$ 
and a map $f_I:\Delta_{I'} \to \Delta_I$, where $I'=i_1\cdots i_{n-1}$.
The side-length  $(a_{I,1}, a_{I,2}, a_{I,3})$ of $\Delta_I$ is also suitably defined inductively.
Take $r<1$ and set   
\[
       \nu := \frac{1}{2}(1+r^2) <1.  
\]

\begin{lemma}   \label{lem:quotient}
There exists an  $L(r)>1$ such that for every $I$ and  $1\le i,j\le 3$
\begin{equation*}
       L(r)^{-1} \frac{a_{i}}{a_{j}} < \frac{a_{I,i}}{a_{I,j}} < L(r)\frac{a_{i}}{a_{j}}. \label{eq:product}
\end{equation*}
%where $(a_1,a_2,a_3)$ is the side-length of $\Delta$.
\end{lemma}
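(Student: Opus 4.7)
The plan is to iterate the one-step comparison \eqref{eq:quotient1} along the multi-index $I$, and control the resulting product by exploiting the fact that the diameters of the $\Delta_I$ shrink geometrically with ratio $\nu$.

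First, I would set up the iteration. Write $I=i_1\cdots i_n$, and for each $0\le m\le n$ let $I^{(m)}=i_1\cdots i_m$ (with $I^{(0)}$ corresponding to $\Delta$ itself). At the $m$th stage we pass from $\Delta_{I^{(m-1)}}$, whose diameter is at most $\nu^{m-1}r$ by iterating Lemma~\ref{lem:Rauch}, to one of its three subtriangles $\Delta_{I^{(m)}}$. The construction is canonical: the subdivision of any geodesic triangle region into three subregions is carried out exactly as the subdivision of $\Delta$ into $\Delta_1,\Delta_2,\Delta_3$, so the analogue of \eqref{eq:quotient1} applies at each stage with $r$ replaced by the current diameter $\nu^{m-1}r$. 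This gives, for all $1\le i,j\le 3$,
\[
\bigl(1-\nu^{2(m-1)}r^2\bigr)\frac{a_{I^{(m-1)},i}}{a_{I^{(m-1)},j}}
<\frac{a_{I^{(m)},i}}{a_{I^{(m)},j}}
<\bigl(1+\nu^{2(m-1)}r^2\bigr)\frac{a_{I^{(m-1)},i}}{a_{I^{(m-1)},j}}.
\]

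Next I would iterate this inequality from $m=1$ up to $m=n$ and telescope. This yields
\[
\prod_{m=0}^{n-1}\bigl(1-\nu^{2m}r^2\bigr)\cdot\frac{a_i}{a_j}
<\frac{a_{I,i}}{a_{I,j}}
<\prod_{m=0}^{n-1}\bigl(1+\nu^{2m}r^2\bigr)\cdot\frac{a_i}{a_j}.
\]
The key observation is that $\sum_{m=0}^{\infty}\nu^{2m}r^2=r^2/(1-\nu^2)<\infty$ since $0<\nu<1$, so (arguing as in Lemma~\ref{lem:converge}) both infinite products converge to finite positive limits depending only on $r$. Defining
\[
L(r):=\max\Biggl\{\prod_{m=0}^{\infty}\bigl(1+\nu^{2m}r^2\bigr),\ \Bigl(\prod_{m=0}^{\infty}\bigl(1-\nu^{2m}r^2\bigr)\Bigr)^{-1}\Biggr\},
\]
which is a finite constant $>1$ provided $r$ is small enough that $\nu^{2m}r^2<1$ for all $m\ge 0$ (e.g.\ $r<1$), we obtain the desired two-sided bound.

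The main technical point is really just the justification of the one-step inequality at each level, i.e.\ verifying that the inductive construction of $\Delta_{I^{(m)}}$ inside $\Delta_{I^{(m-1)}}$ satisfies the analogue of \eqref{eq:quotient1} with $r$ replaced by $|\Delta_{I^{(m-1)}}|$. This is automatic from Lemma~\ref{lem:Rauch} applied to $\Delta_{I^{(m-1)}}$ (whose diameter is bounded by $\nu^{m-1}r$ by the iterate of the diameter estimate in that same lemma), together with the fact that the midpoint subdivision procedure is intrinsic and does not distinguish $\Delta$ from any $\Delta_{I^{(m-1)}}$. Once that is noted, the rest is the convergent-product estimate above, which is the step I expect to be entirely routine.
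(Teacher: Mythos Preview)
Your proposal is correct and follows essentially the same route as the paper: iterate \eqref{eq:quotient1} along the multi-index, bound the successive diameters by $\nu^k r$ via Lemma~\ref{lem:Rauch}, and control the resulting telescoping product by the convergent infinite products $\prod_{m\ge 0}(1\pm\nu^{2m}r^2)$. The only cosmetic difference is that the paper records the intermediate diameters $r_k=|\Delta_{i_1\cdots i_k}|$ before bounding them by $\nu^k r$, and afterwards notes one may take $L(r)=e^{2r^2/(1-\nu^2)}$ explicitly.
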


\begin{proof}
Repeating use of  \eqref{eq:quotient1} and Lemma \ref{lem:Rauch} applied to $s=1/2$ implies that 
for each $I=i_1\cdots i_m$,
\begin{align*}
  (1-r^2_m)\cdots (1-r_1^2)&(1-r^2) \frac{a_i}{a_j} \\
                                     &<\frac{a_{I,i}}{a_{I,j}} < (1+r^2_m)\cdots (1+r_1^2)(1+r^2) \frac{a_i}{a_j} 
\end{align*}  
for every $1\le i,j \le 3$,  where $r_k:=|\Delta_{i_1\cdots i_k}|$, $1\le k\le m$.
Since  
\[
            r_k\le \frac{1}{2}(1+r_{k-1}^2)r_{k-1}<\nu r_{k-1}<\cdots < \nu^k r.
\]
it follows that 
\begin{equation}
       \Pi_{m=0}^{\infty}\left( 1-\nu^{2m}r^2\right )\frac{a_{i}}{a_{j}} <  \frac{a_{I,i}}{a_{I,j}} <
 \Pi_{m=1}^{\infty}\left( 1+\nu^{2m}r^2\right )\frac{a_{i}}{a_{j}}. \label{eq:product}
\end{equation}
This completes the proof.
\end{proof}

From \eqref{eq:product}, one can take $L(r)$ as 
\[
            L(r) :=e^{\frac{2r^2}{1-\nu^2}}.
\]

For every $s\in (0,1]$ we denote by $\Delta(1:s)$ the geodesic triangle
$(\gamma_2|_{[0,s]}, \gamma_3|_{[0,s]}, \sigma_{s})$. Similarly,
$\Delta(i:s)$ and $\Delta_I(i:s)$ are defined for every $1\le i\le 3$
and every multi-index $I\in\mathcal I^*$.

Lemmas \ref{lem:nondeg}, \ref{lem:Rauch} and \ref{lem:quotient} imply 

\begin{lemma} \label{lem:nondeg2}
For every $\delta>0$, there exists a positive number  
$r$  such that 
if $\Delta$ is $\delta$-non-degenerate and the diameter  $|\Delta|$ of
 $\Delta$ is less than $r$, 
then $\Delta_I$ as well as $\Delta_I(i:s)$ is $\delta/2$-non-degenerate
for every multi-index $I$, $1\le i\le 3$ and $s\in (0,1)$. 
\end{lemma}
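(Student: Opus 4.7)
The plan is as follows. Given $\delta>0$, first invoke Lemma \ref{lem:nondeg} to obtain $\epsilon=\epsilon(\delta)>0$ such that any geodesic triangle whose side-length ratios differ multiplicatively from those of a $\delta$-non-degenerate triangle by a factor in $(1-\epsilon,1+\epsilon)$ is itself $\delta/2$-non-degenerate. The task then reduces to choosing $r>0$ small enough that, uniformly in $I\in\mathcal I^*$, $1\le i\le 3$ and $s\in(0,1)$, the side-length ratios of both $\Delta_I$ and $\Delta_I(i:s)$ lie within a factor of $(1\pm\epsilon)$ of the corresponding ratios of $\Delta$.

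For the triangles $\Delta_I$ themselves this is exactly what Lemma \ref{lem:quotient} provides: the ratios $a_{I,i}/a_{I,j}$ differ from $a_i/a_j$ by a factor in $[L(r)^{-1},L(r)]$, where $L(r)=e^{2r^{2}/(1-\nu^{2})}$ and $\nu=(1+r^{2})/2$. Since $L(r)\to 1$ as $r\to 0$, I choose $r$ so small that the stronger combined bound
\[
   L(r)(1+r^{2})\le 1+\epsilon,\qquad L(r)^{-1}(1-r^{2})\ge 1-\epsilon
\]
holds; this in particular forces $L(r)\le 1+\epsilon$, so Lemma \ref{lem:nondeg} applies and $\Delta_I$ is $\delta/2$-non-degenerate.

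For $\Delta_I(i:s)$ I pass through $\Delta_I$ in one extra step. The triangle $\Delta_I(i:s)$ is built from $\Delta_I$ by exactly the construction treated in Lemma \ref{lem:Rauch}: two of its side-lengths are the $s$-scaled versions of the corresponding sides of $\Delta_I$, while the third side has length $a_{I,i}(s)$. Since $|\Delta_I|\le\nu^{|I|}r\le r<1$, Lemma \ref{lem:Rauch} applied \emph{inside} $\Delta_I$ gives $|a_{I,i}(s)/(s\,a_{I,i})-1|<r^{2}$, and hence the side-length ratios of $\Delta_I(i:s)$ differ from those of $\Delta_I$ by a factor in $(1-r^{2},1+r^{2})$. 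Composing with the previous estimate, the ratios of $\Delta_I(i:s)$ relative to those of $\Delta$ lie in the interval $[L(r)^{-1}(1-r^{2}),\,L(r)(1+r^{2})]\subset(1-\epsilon,1+\epsilon)$, and a final application of Lemma \ref{lem:nondeg} finishes the argument.

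The subtle point, and the only real obstacle, is the \emph{uniformity} in $I$ and $s$: a naive attempt would accumulate distortions geometrically in $|I|$ and blow up. What saves us is that the distortion produced by one Rauch step on a triangle of diameter $r_I$ is controlled by $r_I^{2}$, and these diameters decrease like $\nu^{|I|}r$, so the total distortion telescopes into the convergent product defining $L(r)$ in Lemma \ref{lem:quotient}. Consequently all estimates depend only on the ambient parameter $r$ (and the curvature bound) and not on the depth of iteration, which is what makes the single choice of $r=r(\delta)$ work simultaneously for every $I\in\mathcal I^{*}$, $i$ and $s$.
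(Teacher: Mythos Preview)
Your argument is correct and is essentially the same as the paper's: the paper simply records that Lemmas \ref{lem:nondeg}, \ref{lem:Rauch} and \ref{lem:quotient} together imply Lemma \ref{lem:nondeg2}, and you have accurately unpacked how those three ingredients combine (uniform ratio control from Lemma \ref{lem:quotient}, the extra $(1\pm r^{2})$ factor from one application of Lemma \ref{lem:Rauch} inside $\Delta_I$, and then the stability Lemma \ref{lem:nondeg}). Your remark that the depth-uniformity comes from the geometric decay $|\Delta_I|\le\nu^{|I|}r$ feeding into the convergent product defining $L(r)$ is exactly the mechanism behind Lemma \ref{lem:quotient}.
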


By Lemma \ref{lem:nondeg2},  we get the conclusion $(1)$ of Theorem \ref{thm:Sierp-small}.
In view of Theorem \ref{T1}, to prove the conclusion $(2)$ of Theorem \ref{thm:Sierp-small},
it suffices to prove the following.

\begin{theorem} \label{thm:alm-similar}
There is a positive numbers $c=c(\delta)$ such that 
%$\lim_{r\to 0} \nu(r)=1/2$  such that 
$\{ (\Delta_I, f_I)\}_{I\in \mathcal I^*}$ gives a $(1/2,\varphi_c, \nu)$-asynptotic similarity system,
where $\varphi_c(x)=cx^2$. 
\end{theorem}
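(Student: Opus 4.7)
The plan is to verify the two defining conditions of an almost similarity for each map $f_I : \Delta_{I'} \to \Delta_I$ with $I = I' i_n$. The contraction $|\Delta_I| \le \nu|\Delta_{I'}|$ with $\nu = (1+r_0^2)/2 < 1$, where $r_0 := |\Delta|$, is immediate from Lemma~\ref{lem:Rauch} at $s = 1/2$, and the disjointness requirement on the $V_{I'i}$ is built into the Sierpinski subdivision. Hence the substantive task is to establish
\begin{equation*}
\left|\frac{d(f_I(x), f_I(y))}{d(x,y)} - \frac{1}{2}\right| \le \frac{c}{2}\,|\Delta_{I'}|^2, \qquad x\neq y\in\Delta_{I'},
\end{equation*}
uniformly in $I \in \mathcal I^*$, with a constant $c = c(\delta)$ depending only on the non-degeneracy parameter propagated by Lemma~\ref{lem:nondeg2}.

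The strategy is to compare $f_I$ with a Euclidean midpoint subdivision via normal coordinates at the vertex $p$ of $\Delta_{I'}$ from which the two parametrizing edges $\gamma_2,\gamma_3$ emanate. Put $\Phi = \exp_p^{-1}$, $\rho := |\Delta_{I'}|$, and let $\hat\Delta_{I'}\subset T_pM$ denote the Euclidean triangle with vertices $0, \Phi(p_2), \Phi(p_3)$. In $\hat\Delta_{I'}$, the analogous midpoint map $\hat\varphi(t,s) \mapsto \hat\varphi(t,s/2)$ is exactly the homothety $v\mapsto v/2$ centered at the origin, so the task reduces to bounding how the conjugate $\Phi\circ f_I\circ \Phi^{-1}$ differs from this Euclidean homothety. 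The bound $|K|\le 1$ supplies two Jacobi field inputs: first, Rauch's comparison (as already used in Lemma~\ref{lem:Rauch}) gives $d_M(\exp_p u,\exp_p v) = |u-v|(1 + O(\rho^2))$ for $u,v \in \Phi(\Delta_{I'})$; second, the image $\Phi(\sigma_s)$ of each intrinsic geodesic $\sigma_s\subset \Delta_{I'}$ deviates from the straight segment joining its endpoints in $T_pM$ by $O(\rho^3)$, yielding $\Phi(\varphi(t,s/2)) = \tfrac{1}{2}\Phi(\varphi(t,s)) + O(\rho^3)$. Differentiating, $d(\Phi\circ f_I\circ \Phi^{-1}) = \tfrac{1}{2}\mathrm{id} + O(\rho^2)$ in Euclidean coordinates on $T_pM$, and combining with the $O(\rho^2)$ metric distortion of $\Phi$ gives $|df_I(v)| = \tfrac{1}{2}|v|(1 + O(\rho^2))$ on each tangent vector $v$. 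Integrating this pointwise bound along a minimal geodesic joining $x$ to $y$ inside the convex region $\Delta_{I'}$, and applying the same estimate to $f_I^{-1}$ for the reverse inequality, converts the differential control into the asserted ratio bound.

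The main obstacle is securing \emph{uniform} constants across all $I \in \mathcal I^*$. This is where the $\delta/2$-non-degeneracy provided by Lemma~\ref{lem:nondeg2} is essential: without it, the angle at $p$ in $\Delta_{I'}$ could collapse toward $0$ or $\pi$, and the Lipschitz constants relating the intrinsic parametrization $\varphi$ to its Euclidean model would blow up, destroying the uniformity of the Jacobi field comparisons. A secondary subtlety is the need to work at the differential level rather than the position level: the $O(\rho^3)$ positional error in $\Phi\circ f_I\circ \Phi^{-1}$ would otherwise dominate the relative error for pairs $x,y$ with $d(x,y)\ll \rho$, but bounding $df_I$ uniformly and then integrating along geodesics automatically handles every scale of $d(x,y)$, producing the final constant $c = c(\delta)$ and closing the argument.
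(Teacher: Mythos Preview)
Your approach is correct and genuinely different from the paper's. The paper works \emph{intrinsically}: it computes $df$ on the natural frame $\{T_s,J_s\}$ determined by the parametrization $\varphi$, using \eqref{eq:deriv} together with Rauch comparison for the $T_s$-direction and a detailed analysis of $|J_u|/|J_s|$ (decomposition into normal and tangential parts, angle comparisons via Toponogov and the spherical/hyperbolic laws of cosines) for the $J_s$-direction. You instead pull everything back to $T_pM$ by normal coordinates and compare with the exact Euclidean homothety $v\mapsto v/2$, exploiting that the two edges through $p$ become genuine rays and that chord geodesics are straight to $O(\rho^3)$. Your route is shorter and more conceptual, and it makes the role of curvature transparent; the paper's route is heavier but entirely self-contained and avoids any appeal to $C^1$ control of error terms.

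One point to tighten: the step ``differentiating, $d(\Phi\circ f_I\circ \Phi^{-1}) = \tfrac12\mathrm{id} + O(\rho^2)$'' does not follow from the positional bound $\Phi(\varphi(t,s/2)) = \tfrac12\Phi(\varphi(t,s)) + O(\rho^3)$ alone; a $C^0$ estimate on the remainder does not automatically yield a $C^1$ estimate one order lower. What makes it work is that the $O(\rho^3)$ error is an explicit smooth expression in the Christoffel symbols (which are $O(\rho)$ in normal coordinates with $O(1)$ derivatives under $|K|\le 1$), so the remainder is in fact $C^1$-small of order $\rho^2$. You should say this, or alternatively bypass the issue as the paper implicitly does: compute $d(\Phi\circ f_I\circ\Phi^{-1})$ directly on the coordinate basis $\partial_t,\partial_s$ using Jacobi-field/Rauch estimates, rather than differentiating a positional identity. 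Either fix is routine, and with it your argument is complete.
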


\begin{proof}
In view of Lemma \ref{lem:nondeg2}, it suffices to  prove that 
the map $f:=f_1:\Delta\to\Delta_1\subset\Delta$ is a 
 $(1/2,\varphi_c, \nu)$-almost similarity map for a uniform positive constant $c=c(\delta)$.
Note that $J_s(t):=\frac{\partial \varphi}{\partial s}(t,s)$ is a Jacobi field along
$\sigma_s$. Set $T_s(t):=\frac{\partial \varphi}{\partial t}(t,s)=\dot\sigma_s(t)$.
Observe that
\begin{equation}
    df(T_s(t))=T_{s/2}(t), \,\,\,  df(J_s(t))=\frac{1}{2}J_{s/2}(t). \label{eq:deriv}
\end{equation}
Lemma \ref{lem:Rauch} shows that 
\[
      \left|  \frac{L(\sigma_{s/2})}{L(\sigma_{s})} -\frac{1}{2} \right| < 3r^2,
\]
which implies that
\begin{align}
     \left|  \frac{|df(T_s)|}{|T_s|}  -\frac{1}{2} \right| < 3r^2. \label{eq:ratio1}
\end{align}
Next we show 

\begin{lemma} \label{lem:normal-jacobi}
For every $s,u\in (0,1]$ and $t\in [0,1]$, we have 
\begin{align}
     \left|  \frac{|J_{u}(t)|}{|J_s(t)|}  - 1 \right| < C(\delta)r^2. \label{eq:ratio2}
\end{align}
\end{lemma}

From now on, we shall use the general symbols $C(\delta)$ or $c(\delta)$
to denote constants depending only on $\delta$ unless otherwise stated.

\begin{proof}
For any fixed $s$, take  unique Jacobi fields  $Y_1$ and $Y_2$ along $\sigma_s$
and the reverse geodesic $\sigma_s^{-}(t):=\sigma(1-t)$ respectively such that
\[  
      Y_1(0)=0,  \, Y_1(1) =J_s(1),  
      \,\,  Y_2(1)=J_s(0), \, Y_2(0)=0,
\]
to have  
\[
          J_s(t)=Y_1(t))+Y_2(1-t).
\]
We dente by $\mathbb S^2$ and $\mathbb H^2$ the sphere and the
 hyperbolic plane of constant curvature $1$ and $-1$ respectively. 

Recall that $\Delta$ is a $\delta$-non-degenerate geodesic triangle region of
side lengths $(a_1, a_2, a_3)$ in $D$ whose diameter is denoted by $r$.

\begin{lemma}\label{lem:angle1}
Let $\alpha_{i+}$ and  $\alpha_{i-}$be the angles of comparison
 triangles  $\Delta_{+}$ and $\Delta_{-}$ of $\Delta$
in $\mathbb S^2$ and $\mathbb H^2$ respectively at the vertices  opposite to 
the edge of length $a_i$. Then we have
\[
      |\alpha_{i+} - \alpha_{i-}| <  C(\delta)r^2.
\]
%where $r=|\Delta|$ and $c(\delta)$  is a constant depending only on $\delta$.
\end{lemma}

\begin{proof}
Put $(a,b,c):=(a_1, a_2, a_3)$, and let $\alpha_{+}$,  $\alpha_{-}$ and $\alpha$ be the angles of 
comparison triangles of $\Delta$ in $\mathbb S^2$, $\mathbb H^2$  and 
$\mathbb R^2$ respectively 
at the vertices  opposite to the edge of length $a$.
By the laws of cosines, we have
\begin{align*} %\label{eq:cos1}
       \sin b\sin c\cos\alpha_{+} &=\cos a-\cos b\cos c    \\
  \sinh b\sinh c\cos\alpha_{-} &=\cosh b\cosh c -\cosh a\\
  2bc\cos\alpha &= b^2 + c^2 -a^2,
\end{align*}
which imply 
\begin{align*}
       2bc\cos\alpha_{+} &=  2bc\cos\alpha +O(b^3c)+O(bc^3)+O(b^2c^2) +O(a^4)  \\
       2bc\cos\alpha_{-} &= 2bc\cos\alpha +O(b^3c)+O(bc^3)+O(b^2c^2) +O(a^4). %\label{eq:cos2}
\end{align*}
It follows from Sublemma\ref{slem:edge-quot} that
\begin{align*}
       |\cos\alpha_{+} -   \cos\alpha| &\le O(b^2)+O(c^2)+O(bc) +O(a^4/bc)  \\
                                      & \le C(\delta)r^2.
\end{align*}
Since $\delta<\alpha<\pi-\delta$, we obtain
$|\alpha_{+}-\alpha|\le C(\delta)r^2$.
Similarly we get $|\alpha_{-}-\alpha|\le C(\delta)r^2$, and hence 
$|\alpha_{+}-\alpha_{-}|\le C(\delta)r^2$.
\end{proof}

Let $\alpha_{s}$ and $\beta_{s}$ be the angle of the geodesic triangle 
$\Delta(1:s) =(\gamma_2|_{0,s]}, \gamma_3|_{[0,s]}, \sigma_s)$ at $\gamma_2(s)$ and $\gamma_3(s)$
respectively.

\begin{lemma}\label{lem:angle2}
\[
      |\alpha_{s} - \alpha_{t}| <  c(\delta)r^2, \,\, |\beta_{s} - \beta_{t}| <  c(\delta)r^2,
\]
for every  $s,t\in (0,1]$.
\end{lemma}

\begin{proof}
Let $\alpha_{s}^{+}$ , $\alpha_{s}^{-}$ , $\alpha_{s}^{0}$ denote the
 angles of comparison triangles in $\mathbb S^2$, $\mathbb H^2$, and
 $\mathbb R^2$ respectively at the vertices coresponding $\gamma_2(s)$.
By Toponogov's theorem (cf. \cite{CHE}), we have
\begin{align}
       \alpha_s^{-} \le  \alpha_s,  \, \alpha_s^{0} \le  \alpha_s^{+}. \label{eq:Toponogov}
\end{align}
By the law of cosines, we have
\begin{align*}
   \cos\alpha_s^0 &=\frac{a_2^2+(a_1(s)/s)^2-a_3^2}{2a_2(a_1(s)/s)} \\
    \cos\alpha_t^0 &=\frac{a_2^2+(a_1(t)/t)^2-a_3^2}{2a_2(a_1(t)/t)},
\end{align*}
which imply with Lemma\ref{lem:Rauch}
\begin{align*}
   \cos\alpha_s^0  & -  \cos\alpha_t^0  \\
    & \le \frac{a_2^2+a_1^2(1+r^2)-a_3^2}{2a_2a_1(1-r^2)} -\frac{a_2^2+a_1^2(1-r^2)-a_3^2}{2a_2a_1(1+r^2)}    \\
    &=\frac{r^2(2a_1^2+a_2^2-a_3^2)}{a_1a_2(1-r^2)(1+r^2)}\\
    &=\frac{r^2}{1-r^4}\left(\frac{2a_1}{a_2}+\frac{a_2}{a_1}-\frac{a_3^2}{a_1a_2} \right)\\
    &\le C(\delta)r^2.
\end{align*}
Revercing the role of $s$ and $t$, we have 
\[
   |\cos\alpha_s^0 -  \cos\alpha_t^0|\le C(\delta)r^2.
\]
By Lemma \ref{lem:nondeg2}, we have $\delta/2 < (\alpha^0_s+\alpha_t^0)/2 < \pi -\delta/2$,
which implies  $\sin \frac{\alpha^0_s+\alpha_t^0}{2} > \sin (\delta/2)$.
Therefore we conclude that 
\[
     |\alpha_s^0 - \alpha_t^0| \le 4\left| \sin\left( \frac{\alpha_s^0
 -\alpha_t^0}{2}\right)\right| \le C_1(\delta)r^2. 
\]
where $C_1(\delta):= \frac{2C(\delta)}{\sin (\delta/2)}$
Using \eqref{eq:Toponogov} and Lemma \ref{lem:angle1}, we see
\begin{align*}
      \alpha_s & \le \alpha _s^0 + C(\delta)r^2  \\
                  &\le \alpha _t^0 + C(\delta)r^2 +C_1(\delta)r^2\\
                  & \le \alpha_t + 2C(\delta)r^2 +C_1(\delta)r^2.
\end{align*}
Reversing the role of $s$ and $t$ completes the proof.
\end{proof}

Next we analyze the behavior of the norm of Jacobi field $J_s$. 
For a fixed $s\in (0,1]$, 
let $Y_i(t)=Y_i^{N}(t)+Y_i^{T}(t)$, $i=1,2$, be the orthogonal decompositions
of $Y_i$ to the normal and tangential components to $\dot\sigma_s$. 
We can write $Y_i(t)$ and $Y_i(t)^N$ as 
\begin{align} \label{eq:express2}
      Y_1(t)&=d \exp_{\gamma_2(s)} (t(V_1)_{t\dot\sigma_s(0)}),\,\,
     Y_2(t)=d \exp_{\gamma_3(s)} (t(V_2)_{t\dot\sigma_s^{-}(0)}), \\
                            \label{eq:express1}
      Y_1^{N}(t)&=d \exp_{\gamma_2(s)}( t(V_1^N)_{t\dot\sigma_s(0)}),\,\,
     Y_2^{N}(t)=d \exp_{\gamma_3(s)} (t(V_2^N)_{t\dot\sigma_s^{-}(0)}),
\end{align}
where $V_1$ and $V_2$ are some parallel vector fields on the tangent spaces
satisfying
\[
  d \exp_{\gamma_2(s)} ((V_1)_{\dot\sigma_s(0)}) =\dot\gamma_3(s),\,\,
  d \exp_{\gamma_3(s)} ((V_2)_{\dot\sigma_s^{-}(0)}) =\dot\gamma_2(s).
\]
The Rauch comparison theorem shows that
\begin{equation*}
      |Y_1^{N}(t)| \fallingdotseq t|V_1^N|  \fallingdotseq t|\dot\gamma_3(t)^N|, \,\,
     |Y_2^{N}(1-t)|\fallingdotseq (1-t)|V_2^N|  \fallingdotseq (1-t)|\dot\gamma_2(t)^N|.
\end{equation*}
Here and hereafter  we use the symbol $a\fallingdotseq b$ whenever  
$\left|\frac{a}{b}-1\right|<C(\delta)r^2$.
It follows from $\dim M=2$ that
\begin{align}
|J_s^N(t)| &= |Y_1^{N}(t)| +  |Y_2^{N}(1-t)| \\
             & \fallingdotseq t|\dot\gamma_3(t)^N|+    (1-t)|\dot\gamma_2(t)^N| \\
             &=t \sin \beta_s a_3 + (1-t)\sin\alpha_s a_2,
                           \label{eq:angle2}
\end{align}
where we recall  $a_i=L(\gamma_i)=|\dot\gamma_i(t))|$. 
Similarly we have
\begin{equation*}
    |J_{u}^N(t)| \fallingdotseq 
                    t \sin \beta_{u}a_3+ (1-t)\sin\alpha_{u}a_2.
                                \label{eq:angle3}
\end{equation*}
It follows from %\eqref{eq:angle2}, \eqref{eq:angle3} 
that
\begin{equation}
      |J_s^N(t)|   \fallingdotseq |J_{u}^N(t)|. \label{eq:norm}
\end{equation}
Next we show that
\begin{equation}
      |J_s^T(t)|   \fallingdotseq  |J_{u}^T(t)|. \label{eq:tangl}
\end{equation}
We use the expression \eqref{eq:express2} with Gauss's lemma to obtain 

\begin{align*} \label{eq:tang2}
      & \langle Y_1(t), T_s(t)  \rangle = t a_3|T_s|\cos\beta_s,\,\, \\
       &\langle Y_2(t), T_s(t)  \rangle  =  -(1-t) a_2|T_s|\cos\alpha_s.
\end{align*}
Thus we get
\[
       |J_s^T(t)|  =  |t a_3\cos\beta_s -(1-t)a_2\cos\alpha_s|.
\]
From an inequality for $|J_u^T(t)|$ similar to the above and Lemma \ref{lem:angle2}, we have
\eqref{eq:tangl}.
Now \eqref{eq:ratio2} follows from \eqref{eq:norm}, \eqref{eq:tangl}.
Thus we have completed the proof of Lemma \ref{lem:normal-jacobi}.
\end{proof}

The expression \eqref{eq:express2} also yields 
\[
    |Y_1(t)|\fallingdotseq t|V_1| \fallingdotseq t a_3, \,\,
     |Y_2(1-t)|\fallingdotseq (1-t)|V_2| \fallingdotseq (1-t) a_2.
\]
In particular we have 
\begin{equation} \label{eq:jacobi}
         |J_s(t)|\le 2r.
\end{equation}
Since  $|J_s^N(t)|\ge c(\delta)r$ from \eqref{eq:angle2},  \eqref{eq:jacobi}
implies that the angle $\theta_s(t):=\angle(J_s(t), T_s(t))$ has definite
lower and upper bounds:
\begin{equation}
          0<c(\delta) \le  \theta_s(t) \le \pi - c(\delta).   \label{eq:angle4}
\end{equation}
\eqref{eq:deriv}, 
\eqref{eq:ratio1}, \eqref{eq:ratio2} and \eqref{eq:angle4} yield that 
\[
     \left| \frac{|df(v)|}{|v|} - \frac{1}{2} \right| < C(\delta)r^2,
\]
for every tangent vector $v$. 
Thus we conclude that
$f:\Delta\to\Delta_1$ is a $(1/2, \varphi_{C(\delta)}, \nu)$-almost similarity map,
with $\varphi_{C(\delta)}(x)=C(\delta)x^2$.
This completes the proof of Theorem $(2)$ \ref{thm:alm-similar}.
\end{proof}

\begin{proof}[Proof of Corollary \ref{thm:nondeg}]
In view of Theorem \ref{T1}, it suffices to show that 
for a geodesic triangle region  $\Delta$ on a convex domain of 
a complete surface, if the collection $\{ (\Delta_I,f_I)\}_{I\in \mathcal I^*}$ 
gives a $(\{ 1/2,1/2,1/2\},\varphi_C, \nu)$-asymptotic similarity system
with  $\varphi_C(x)=Cx^2$ and $0<\nu < 1$,
then $\Delta$ is asymptotically non-degenerate.

For a large $n_0$, fix an abitrary $I_0=i_1\cdots i_{n_0}\in\mathcal I_{n_0}$, 
and set  
\[
 W:= \Delta_{I_0}= g_{I_0}(\Delta)=f_{I_0}\circ \cdots f_{i_1i_2}\circ
 f_{i_1}(\Delta).
\]
For every $1\le i\le k$, put 
\[
     h_i:=f_{I_0i}:W\to  W_{i}=h_i(W)\subset  W, 
\]
and recall from the definition
\[
       \left| \frac{d(h_i(x), h_i(y))}{d(x,y)} - \lambda_i\right| < o(n_0),
\]
where $o(n_0)=\lambda_i\varphi(\nu^{n_0}|\Delta|)$ and therefore
$\lim_{n_0\to\infty} o(n_0)=0$. 
For $J=j_1\cdot \cdot j_m$, define 
$g_J:W \to W_J$ by 
\[
                           g_J:= h_J\circ \cdots \circ h_{j_1j_2}\circ h_{j_1},
\]
where we use the notation 
\[
 h_{j_1\cdot \cdot j_{\ell}}:= f_{Ij_1\cdot \cdot j_{\ell}}
: W_{j_1\cdot \cdot j_{\ell-1}} \to  W_{j_1\cdot \cdot j_{\ell}},
\]
as before. By Lemma \ref{lem:g-dil}, 
we have 
\[ 
      \left| \frac{d(g_J(x), g_J(y))}{d(x,y)} - \lambda_J\right| < o(n_0)\lambda_J,
\]
for every $x,y\in W$. We denote by ${\rm inrad}(W)$, the inradius of $W$, the largest $r>0$ such 
that an $r$-ball is contained in $W$.
It follows that 
\[
    \frac{|W_J|}{{\rm inrad}(W_J)}\le \frac{1+o(n_0)}{1-o(n_0)} \frac{|W|}{{\rm inrad}(W)},
\]
for every $J\in\mathcal I^*$. This implies that 
there exists a $\delta>0$ such that $\Delta_I$ is 
$\delta$-nondegenerate for every $I\in\mathcal I^*$.
\end{proof}

% \begin{theorem} 
% A geodesic triangle domain $\Delta$ in a convex domain on a complete surface is 
% asymptotically non-degenerate if and only if 
% there exist constants $c>0$ and $0<\nu<1$ such that 
% in the asymptotic similarity system $\{ \Delta_I,f_I\}_{I\in \mathcal I^3}$,
% each $f_I:\Delta_{I'}\to \Delta_I$ is a $(1/2,c,\nu)$-almost similarity map. 
% \end{theorem}

% If a geodesic triangle domain $\Delta$ in a convex domain on a complete
% surface provides an asymptotic similarity system $\{ \Delta_I,f_I\}_{I\in \mathcal I^3}\}$
% on $\Delta$, each $f_I:\Delta_{I'}\to \Delta_I$ is a $(1/2,c, \nu)$-almost similarity map
% for some uniform constants $c>0$ and $0<\nu<1$, then shows that $\Delta$ is  asymptotically non-degenerate.
% Thus we have just  proved 

% \begin{corollary} \label{cor:Sierp}
% If a geodesic triangle domain $\Delta$ in a convex domain on a surface is 
% asymptotically non-degenerate, then an asymptotic Sierpinski gasket $K$ on
% $\Delta$ can be constructed,  and its Hausdorff and box dimensions are  given
% by
% \begin{equation}
%    \dim_H K = \dim_B K = \frac{\log3}{\log2}. \label{eq:HB-dim}
% \end{equation}
% \end{corollary}

%%%%start of the bibliography

\end{document}